\newtheorem{theorem}{Theorem}[section]
\newtheorem{lemma}[theorem]{Lemma}
\theoremstyle{definition}
\newtheorem{conjecture}[theorem]{Conjecture}
\begin{document}
\title[Generalized quadratic Gauss sums weighted by $L$-functions]
{Higher order moments of generalized quadratic Gauss sums weighted by $L$-functions}

%    Only \author and \address are required; other information is
%    optional.  Remove any unused author tags.

%    author one information
 %\author[short version for running head]{}
\author{Nilanjan Bag}
\address{Department of Mathematics, Indian Institute of Technology Guwahati, North Guwahati, Guwahati-781039, Assam, INDIA}
\curraddr{}
\email{b.nilanjan@iitg.ac.in}
\author{Rupam Barman}
\address{Department of Mathematics, Indian Institute of Technology Guwahati, North Guwahati, Guwahati-781039, Assam, INDIA}
\curraddr{}
\email{rupam@iitg.ac.in}
\thanks{}

%    author two information

%    \subjclass is required.
\subjclass[2010]{11L05, 11M20.}
\date{ August 24, 2020 (version-1)}
\keywords{generalized quadratic Gauss sums; $L$-functions; asymptotic formula.}
%\thanks{Acknowledgement:}
%    Abstract is required.
\begin{abstract} The main purpose of this paper is to study higher order moments of the generalized quadratic Gauss sums weighted by $L$-functions using estimates for character sums and analytic methods. We find asymptotic formulas for three character sums which arise naturally in the study of higher order moments of the generalized quadratic Gauss sums. We then use these character sum estimates to find asymptotic formulas for the $\nth{6}$ and $\nth{8}$ order moments of the generalized quadratic Gauss sums weighted by $L$-functions. Our asymptotic formulas satisfy a conjecture of Wenpeng Zhang.
\end{abstract}
\maketitle
\section{Introduction and statement of the result}
Let $q\geq 2$ be an integer, and let $\chi$ be a Dirichlet character modulo $q$. For $n\in \mathbb{Z}$, we define the generalized quadratic Gauss sum $G(n,\chi;q)$ as 
\begin{equation}
 G(n,\chi;q)=\sum_{a=1}^q\chi(a)e\left(\frac{na^2}{q}\right),\notag
\end{equation}
where $e(y)=e^{2\pi iy}$. This sum generalizes the classical quadratic Gauss sum $G(n;q)$, which is defined as 
\begin{equation}
 G(n;q)=\sum_{a=1}^{q}e\left(\frac{na^2}{q}\right).\notag
\end{equation}
The properties of $G(n,\chi;q)$ have been studied for long time. The values of $G(n,\chi;q)$ behave irregularly as $\chi$ varies. From a result 
of Cochrane and Zheng \cite{cochrane}, one can find an upper bound of $|G(n,\chi;q)|$ for any positive integer $n$ with $\gcd(n,q)=1$. In the case of prime $q$,
finding such bounds is due to Weil. Also, see \cite{weil}. Let $p$ be an odd prime and $L(s,\chi)$ denote the 
Dirichlet $L$-function corresponding to the character $\chi \mod p$. Let $\chi_0$ denote the principal character modulo $p$.
 For a general integer $m\geq 3$, whether there exists an asymptotic formula for 
 \begin{align*}
 \sum_{\chi \hspace{-.2cm}\mod p}|G(n,\chi;p)|^{2m} \text{ and }  \sum_{\chi \neq \chi_0}|G(n,\chi;p)|^{2m}\cdot |L(1,\chi)|
 \end{align*}
 is an unsolved problem. 
In \cite{zhang}, Zhang conjectured the following.
\begin{conjecture}\label{C1}
For all positive integer $m$,
\begin{align*}
\sum_{\chi\neq\chi_0}|G(n,\chi;p)|^{2m}\cdot |L(1,\chi)|\sim C\sum_{\chi\hspace{-.2cm}\pmod p}|G(n,\chi;p)|^{2m}, \qquad p\rightarrow+\infty,
\end{align*}
where
\begin{align}\label{constant-c}
C=\prod_p\left[1+\frac{\binom{2}{1}^2}{4^2.p^2}+\frac{\binom{4}{2}^2}{4^4.p^4}+\cdots+\frac{\binom{2m}{m}^2}{4^{2m}.p^{2m}}+\cdots\right]
\end{align}
is a constant and $\prod_p$ denotes the product over all primes.
\end{conjecture}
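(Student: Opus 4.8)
The plan is to show that as $p\to\infty$ the weight $|L(1,\chi)|$ becomes statistically independent of $|G(n,\chi;p)|^{2m}$ and has limiting mean exactly $C$, so that the weighted moment is $C$ times the plain moment. I would begin from an exact evaluation of the square. For $p$ an odd prime, $\gcd(n,p)=1$ and $\chi\neq\chi_0$, substituting $a\equiv bc\pmod p$ in $|G(n,\chi;p)|^2=\sum_{a,b}\chi(a)\overline\chi(b)e(n(a^2-b^2)/p)$ and applying the standard evaluation of the inner quadratic Gauss sum $\sum_{b}e(mb^2/p)$ gives
\[
 |G(n,\chi;p)|^2=(1+\chi(-1))\,p+\sqrt{p}\,\eta(\chi),\qquad \eta(\chi)=\epsilon_p\Big(\tfrac np\Big)\sum_{c=1}^{p-1}\chi(c)\Big(\tfrac{c^2-1}{p}\Big),
\]
where $\epsilon_p=1$ or $i$ according as $p\equiv1$ or $3\pmod4$, and Weil's bound for $W(\chi):=\sum_c\chi(c)\big(\tfrac{c^2-1}p\big)$ gives $\eta(\chi)\ll\sqrt p$. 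Thus on even characters $|G(n,\chi;p)|^2=2p+\sqrt p\,\eta(\chi)$ and on odd characters $|G(n,\chi;p)|^2=\sqrt p\,\eta(\chi)$, so raising to the $m$-th power writes $|G(n,\chi;p)|^{2m}$ as an explicit polynomial in $p$ and $\eta(\chi)$; in particular the even characters' leading term forces $\sum_\chi|G(n,\chi;p)|^{2m}\asymp p^{m+1}$.

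Next I would pin down the constant. From $|L(1,\chi)|=\prod_\ell|1-\chi(\ell)\ell^{-1}|^{-1}$ and $(1-z)^{-1/2}=\sum_{k\ge0}\binom{2k}{k}4^{-k}z^k$, averaging one Euler factor over the unit circle yields $\frac1{2\pi}\int_0^{2\pi}|1-\ell^{-1}e^{i\theta}|^{-1}\,d\theta=\sum_{k\ge0}\binom{2k}{k}^2 16^{-k}\ell^{-2k}$, which is exactly the $\ell$-factor of $C$ in \eqref{constant-c}. Hence $C$ is the expected value of $|L(1,\chi)|$ under the model in which the $\chi(\ell)$ are independent and uniform on the unit circle, and the conjecture is equivalent to the single decorrelation statement $\sum_{\chi}|G(n,\chi;p)|^{2m}\big(|L(1,\chi)|-C\big)=o(p^{m+1})$, the principal character contributing only $O(p^m)$. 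Substituting the polynomial expansion of $|G(n,\chi;p)|^{2m}$, this reduces, for each $0\le j\le m$ and each parity class of $\chi$, to
\[
 \sum_{\chi}\eta(\chi)^{\,j}\big(|L(1,\chi)|-C\big)=o\!\left(p^{\,1+j/2}\right).
\]

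The case $j=0$ is the mean value $\tfrac{2}{p-1}\sum_{\chi\neq\chi_0}|L(1,\chi)|\to C$; I would establish it by truncating the Euler product at primes $\ell\le y=y(p)\to\infty$, expanding the finite product into a short Dirichlet polynomial, applying orthogonality modulo $p$ so that the diagonal reproduces $\prod_{\ell\le y}\sum_k\binom{2k}{k}^2 16^{-k}\ell^{-2k}$ while the off-diagonal is $O(y^{A}/p)$, and controlling the tail $\ell>y$ through the known uniform results of Granville and Soundararajan on the distribution of $L(1,\chi)$. For $j\ge1$ the same truncation turns the target into hybrid sums $\sum_\chi\eta(\chi)^j\chi(r)\overline\chi(s)$, in which $\eta(\chi)^j$ carries character data at the scale of the modulus $p$ while $\chi(r)\overline\chi(s)$ carries data at the fixed small primes $\le y$; these decouple after orthogonality, and for small $j$ one evaluates the resulting sums explicitly — precisely the character sum estimates of the present paper.

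The main obstacle is to make the estimate for $j\ge1$ uniform in $j$, hence in $m$. My plan here is to avoid Cauchy–Schwarz, since $\sum_\chi|\eta(\chi)|^{\,j}\,|L(1,\chi)|\le\big(\sum_\chi|\eta(\chi)|^{2j}\big)^{1/2}\big(\sum_\chi|L(1,\chi)|^2\big)^{1/2}$, fed by $\sum_\chi|L(1,\chi)|^2\ll p$ and the even-moment bound $\sum_\chi|\eta(\chi)|^{2j}\ll p^{\,j+1}$, yields only the borderline $O(p^{\,1+j/2})$ and so discards exactly the cancellation one needs. Instead I would expand $\eta(\chi)^j$ as a $j$-fold product of the sums $W(\chi)$, retain the oscillation of $\chi$, and after the small-prime expansion and orthogonality reduce to counting solutions of a system of one multiplicative and one quadratic congruence (through the factors $(\tfrac{c^2-1}p)$) in $O(j)$ variables twisted by the small-prime data; I would bound its deviation from the expected main term by Weil/Deligne estimates and then resum over the small primes by partial summation, so that the fixed-prime factors contribute a convergent constant rather than a power of $y$. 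The genuine analytic core, and the reason the argument is presently executed explicitly only for the few values of $j$ entering the sixth and eighth moments ($m=3,4$), is that these estimates must be made uniform in the number $O(j)$ of variables with a saving of a fixed positive power of $p$ independent of $j$; securing such a uniform saving is the step on which the full conjecture turns.
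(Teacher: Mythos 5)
This statement is a \emph{conjecture}, and the paper does not prove it: it verifies only the cases $m=3$ (completing $p\equiv 1\pmod 4$ via Theorems \ref{MT-1} and \ref{MT-3}) and $m=4$ (Theorems \ref{MT-2} and \ref{MT-4}), and explicitly calls the general case an unsolved problem. Your proposal is likewise not a proof, and you concede as much in your last sentence: the decisive step --- a power saving, uniform in $j$ (hence in $m$), for the sums $\sum_{\chi}\eta(\chi)^{j}\bigl(|L(1,\chi)|-C\bigr)$ --- is left entirely open. Your reduction to $o(p^{1+j/2})$ for each $0\le j\le m$ is arithmetically correct, but for even $j$ it demands exact cancellation of main terms between the weighted and unweighted moments of $\eta(\chi)$ (the paper realizes this for $j=2$ via \eqref{A_2-bdd} and for $j=4$ via \eqref{B_4-bdd} together with $T=p^2+O(p^{3/2})$), and your plan for general $j$ --- ``counting solutions of a multiplicative and a quadratic congruence in $O(j)$ variables, bounded by Weil/Deligne'' --- supplies no mechanism for this: the varieties change with $j$, the conductor/Betti-number constants in Deligne-type bounds grow with the number of variables, and identifying (let alone cancelling) the main terms of these $O(j)$-variable sums uniformly is precisely the obstruction. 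Naming the obstruction is not the same as overcoming it, so there is a genuine gap and the conjecture remains unproved by your argument.

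Where your sketch is concrete, it tracks the paper's actual route for the proved cases. Your identity $|G(n,\chi;p)|^2=2p+\sqrt{p}\,\eta(\chi)$ on even characters is Zhang's identity (Lemma \ref{gen1}), valid for \emph{even non-principal} $\chi$; for odd $\chi$ both $G(n,\chi;p)$ and $W(\chi)=\sum_c\chi(c)\left(\frac{c^2-1}{p}\right)$ vanish identically by the substitution $c\mapsto -c$, so your odd-character formula is vacuous and the decorrelation statement should simply be restricted to even characters, as the paper does in \eqref{cancel}. Your computation identifying the Euler factor of $C$ in \eqref{constant-c} with the circle average of $|1-\ell^{-1}e^{i\theta}|^{-1}$ is correct and is the right heuristic, and your $j=0$ case is Zhang's mean value (Lemma \ref{G3}), which the paper quotes rather than reproves. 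For $j=1,2,3,4$ the paper does not use your truncated-Euler-product/hybrid-sum decoupling; it instead combines the single estimate $\sum_{a}\bigl|\sum_{\chi\neq\chi_0}\chi(a)|L(1,\chi)|\bigr|=O(p\ln p)$ (Lemma \ref{gen6}) with pointwise bounds, uniform in the shift $a$, for the two- and three-variable character sums of Theorems \ref{lemma-sum-1}, \ref{lem-T} and \ref{sum-2} --- and those are proved not by elementary congruence counting but by the quasi-orthogonality of trace functions of the Legendre-family sheaf and Kummer sheaves in the sense of Fouvry--Kowalski--Michel--Sawin. Your correct observation that Cauchy--Schwarz only yields the borderline $O(p^{1+j/2})$ explains why such genuinely two-term decorrelation inputs are needed; but absent a uniform-in-$j$ version of them, your argument establishes nothing beyond what the paper already proves.
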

 For an analogous study on central value of moments of twisted L-functions, see \cite{BFKMMS}. Zhang \cite{zhang} showed that $G(n,\chi;p)$ enjoys many good weighted mean value properties. He used estimates for character sums and analytic methods
to study the second, fourth and sixth order moments of generalized quadratic Gauss sums weighted by $L$-functions. To be specific, he proved that for any
integer $n$ with $\gcd(n,p)=1$,
\begin{align*}
\sum_{\chi\neq\chi_0}|G(n,\chi;p)|^2\cdot|L(1,\chi)|=C\cdot p^2+O(p^{3/2}\cdot\ln^2p)\notag
\end{align*}
and
\begin{align*}
\sum_{\chi\neq\chi_0}|G(n,\chi;p)|^4\cdot|L(1,\chi)|=3\cdot C\cdot p^3+O(p^{5/2}\cdot\ln^2p),\notag
\end{align*}
where $C$ is given by \eqref{constant-c}. He also found the following asymptotic formula for the $\nth{6}$ order moment of the generalized quadratic Gauss sums. He proved that, 
for an odd prime $p\equiv 3 \pmod 4$ and for any fixed positive integer $n$ with $\gcd(n,p)=1,$
\begin{align*}
\sum_{\chi\neq\chi_0}|G(n,\chi;p)|^6\cdot|L(1,\chi)|=10\cdot C\cdot p^4+O(p^{7/2}\cdot\ln^2p).\notag
\end{align*} 
Finding asymptotic formulas for the $\nth{6}$ order moment in case of $p\equiv 1 \pmod 4$  and for the higher order moments seem to be more difficult. To find asymptotic formulas for the higher order moments, one needs to estimate more complicated character sums, and the ideas used in \cite{zhang} are not sufficient to estimate such character sums. In this paper we employ certain ideas from Algebraic Geometry to estimate the following three character sums.
\begin{theorem}\label{lemma-sum-1}
	Let $p$ be an odd prime, and let $a\in\mathbb{F}_p\setminus\{0,\pm 1\}$. Then we have
	\begin{align*}
	\sum_{b=1}^{p-1}\sum_{c=1}^{p-1}\sum_{d=1}^{p-1}\left(\frac{b^2-a^2c^2}{p}\right)\left(\frac{b^2-1}{p}\right)\left(\frac{d^2-c^2}{p}\right)\left(\frac{d^2-1}{p}\right)=O(p^{3/2}).
	\end{align*}
\end{theorem}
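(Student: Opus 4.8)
The plan is to exploit the product structure of the summand. Writing $\chi=\left(\frac{\cdot}{p}\right)$ for the quadratic character, the summand factors as $\chi\big((b^2-a^2c^2)(b^2-1)\big)\,\chi\big((d^2-c^2)(d^2-1)\big)$, so summing over $b$ and $d$ first turns the triple sum into $\sum_{c=1}^{p-1}S_b(c)\,S_d(c)$, where $S_b(c)=\sum_b\chi\big((b^2-a^2c^2)(b^2-1)\big)$ and $S_d(c)=\sum_d\chi\big((d^2-c^2)(d^2-1)\big)$. Each inner sum is a character sum of a biquadratic polynomial, and the substitution $t=x^2$ (using $\#\{x:x^2=t\}=1+\chi(t)$) splits it into a quadratic character sum contributing $O(1)$ plus the cubic sum $\sum_t\chi\big(t(t-1)(t-\lambda)\big)=-a_p(E_\lambda)$ attached to the Legendre elliptic curve $E_\lambda:y^2=t(t-1)(t-\lambda)$. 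I would thereby obtain the clean identities $S_d(c)=-2-a_p(E_{c^2})$ and $S_b(c)=-2-a_p(E_{a^2c^2})$ for $c\neq0,\pm1$, the finitely many exceptional $c$ together with the boundary contributions $b,c,d=0$ amounting to $O(p)$.

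Substituting and expanding, the constant term and the cross terms $\sum_c a_p(E_{c^2})$ and $\sum_c a_p(E_{a^2c^2})$ collapse to $O(p)$: opening each $a_p$ and evaluating the resulting Jacobi-type inner sums shows the two linear terms are $O(1)$. The whole problem then reduces to proving $\sum_{c=1}^{p-1}a_p(E_{a^2c^2})\,a_p(E_{c^2})=O(p^{3/2})$. Replacing the sum over $c\in\mathbb{F}_p^{\times}$ by a sum over $\lambda=c^2$ (each quadratic residue hit twice) converts this into the two sums $\sum_{\lambda}a_p(E_{a^2\lambda})a_p(E_\lambda)$ and $\sum_\lambda\chi(\lambda)a_p(E_{a^2\lambda})a_p(E_\lambda)$, and it suffices to bound each by $O(p^{3/2})$.

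Here I would pass to algebraic geometry. Let $\mathcal{F}=R^1\pi_\ast\overline{\mathbb{Q}}_\ell$ be the rank-two middle-extension sheaf on the $\lambda$-line attached to the Legendre family, lisse of weight one on $\mathbb{A}^1\setminus\{0,1\}$ with $\mathrm{Tr}(\mathrm{Frob}_\lambda\mid\mathcal{F})=a_p(E_\lambda)$, and let $\mathcal{F}_1,\mathcal{F}_2$ be its pullbacks along $\lambda\mapsto a^2\lambda$ and $\lambda\mapsto\lambda$. The two sums are then Frobenius traces of the rank-four sheaf $\mathcal{G}=\mathcal{F}_1\otimes\mathcal{F}_2$, respectively of its Kummer twist $\mathcal{L}_\chi\otimes\mathcal{G}$, over a dense open $U\subset\mathbb{A}^1$. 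The Grothendieck--Lefschetz trace formula together with Deligne's Weil~II bound yields a contribution of size $O(p^{3/2})$ from $H^1_c(U_{\overline{\mathbb{F}}_p},\mathcal{G})$ --- whose dimension is bounded independently of $p$ by the Euler--Poincar\'e formula --- provided $H^0_c=H^2_c=0$. The vanishing of $H^0_c$ is automatic on an affine curve, while $H^2_c$ is the Tate-twisted geometric coinvariants of $\mathcal{G}$, which vanish precisely when $\mathcal{G}$ contains no geometrically trivial constituent.

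The main obstacle is exactly this last point: I must show $\mathcal{F}_1\otimes\mathcal{F}_2$ has no trivial geometric subrepresentation, equivalently (since $\mathcal{F}_i^\vee\cong\mathcal{F}_i(1)$) that $\mathcal{F}_1$ and $\mathcal{F}_2$ are not geometrically isomorphic, i.e. that $E_{\lambda}$ and $E_{a^2\lambda}$ are not geometrically isogenous over $\overline{\mathbb{F}}_p(\lambda)$. This is where the hypothesis $a\in\mathbb{F}_p\setminus\{0,\pm1\}$ enters: the two non-isotrivial families have $j$-invariants $j(\lambda)$ and $j(a^2\lambda)$ that are distinct non-constant rational functions not matched by any modular correspondence, so the geometric monodromy of the pair is Zariski-dense in $SL_2\times SL_2$ and $\mathcal{G}$ is geometrically irreducible and nontrivial; in particular $H^2_c=0$. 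Were $a=\pm1$ the two families would coincide and $\wedge^2\mathcal{F}\cong\overline{\mathbb{Q}}_\ell(-1)$ would force a nonzero $H^2_c$ and a main term of size $p^2$, so the exclusion is sharp. Granting the non-isogeny, only $H^1_c$ survives, both reduced sums are $O(p^{3/2})$, and the theorem follows.
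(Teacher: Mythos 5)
Your proposal is correct and follows essentially the same route as the paper: both reduce the triple sum, via $\#\{x:x^2=t\}=1+\left(\frac{t}{p}\right)$, to a correlation sum of the trace function of the Legendre-family sheaf against its multiplicative translate by $a^2$ (with a quadratic Kummer twist), and conclude by quasi-orthogonality/Weil II once the two sheaves are shown not to be geometrically isomorphic up to the relevant rank-one twist. The only stylistic difference is at that last step: where you invoke non-isogeny of $E_\lambda$ and $E_{a^2\lambda}$ and Goursat-type density in $SL_2\times SL_2$, the paper verifies the same non-isomorphism concretely by comparing local monodromy (nontrivial unipotent at $1$ versus at $1/a^2$), which is also exactly why the hypothesis $a\neq\pm1$ is needed, as you correctly observe.
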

The proof of Theorem \ref{lemma-sum-1} does not hold for $a=\pm 1$. In the following theorem we find an asymptotic formula for the above sum in case of $a=\pm 1$.
%%%%%%%%%%%%%%%%%%%%%%%%%%%%%%
\begin{theorem}\label{lem-T} Let $p$ be an odd prime. We have 
	\begin{align*}
	\sum_{b=1}^{p-1}\sum_{c=1}^{p-1}\sum_{d=1}^{p-1}\left(\frac{b^2-c^2}{p}\right)\left(\frac{b^2-1}{p}\right)\left(\frac{d^2-c^2}{p}\right)\left(\frac{d^2-1}{p}\right)=3p^2+O(p^{3/2}).
	\end{align*}
\end{theorem}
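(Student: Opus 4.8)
The plan is to exploit the symmetry of the summand, which is the product of a factor depending on $(b,c)$ and an identical factor depending on $(d,c)$. Writing $\chi$ for the Legendre symbol modulo $p$, setting
\[
S(c)=\sum_{b=1}^{p-1}\chi\bigl((b^2-c^2)(b^2-1)\bigr),
\]
and denoting the target triple sum by $\Sigma$, the sum collapses to $\Sigma=\sum_{c=1}^{p-1}S(c)^2$. First I would isolate the degenerate values $c\equiv\pm1$: there the quartic $(b^2-c^2)(b^2-1)$ becomes the square $(b^2-1)^2$, so $S(\pm1)=p-3$ and these two terms contribute $2(p-3)^2=2p^2+O(p)$. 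For the remaining $c$ I would complete the $b$-sum to all of $\mathbb{F}_p$ and substitute $t=b^2$ (each nonzero square being hit twice), which turns $S(c)$ into $S(c)=-2+E(c)$, where $E(c)=\sum_{t}\chi\bigl(t(t-1)(t-c^2)\bigr)$ is the Frobenius trace sum of the Legendre elliptic curve $y^2=t(t-1)(t-c^2)$ and satisfies $|E(c)|\le 2\sqrt p$ by Weil.

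Expanding $\sum_{c\neq\pm1}S(c)^2=\sum_{c\neq\pm1}\bigl(4-4E(c)+E(c)^2\bigr)$, the constant piece is $O(p)$, and a short character-sum computation gives $\sum_{c}E(c)=O(1)$, so the whole problem reduces to evaluating the second moment $\sum_{c=1}^{p-1}E(c)^2$. Squaring $E$ and carrying out the $c$-summation first, I would write
\[
\sum_{c=1}^{p-1}E(c)^2=\sum_{t_1,t_2}\chi\bigl(t_1(t_1-1)t_2(t_2-1)\bigr)\sum_{c=1}^{p-1}\chi\bigl((c^2-t_1)(c^2-t_2)\bigr).
\]
The diagonal $t_1=t_2$ is elementary: the inner sum equals $p-3$ or $p-1$ according as $t_1$ is a nonzero square or a nonsquare, and counting residues yields exactly $p^2+O(p)$; this diagonal is the source of the extra $p^2$ in the main term. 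On the off-diagonal, substituting $s=c^2$ once more evaluates the inner sum as $-2+F(t_1,t_2)$ with $F(t_1,t_2)=\sum_{u}\chi\bigl(u(u-t_1)(u-t_2)\bigr)$; the constant $-2$ contributes $O(p)$, and collecting the surviving term reduces everything to the single quantity
\[
W=\sum_{t_1,t_2}\chi\bigl(t_1(t_1-1)t_2(t_2-1)\bigr)F(t_1,t_2)=\sum_{u}\chi(u)H(u)^2,\qquad H(u)=\sum_{t}\chi\bigl(t(t-1)(t-u)\bigr).
\]

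The main obstacle is to prove $W=O(p^{3/2})$; this is precisely where the elementary methods of Zhang break down and where the algebraic-geometric input is needed. A direct computation gives $\sum_u H(u)^2=p^2+O(p)$, so $H(u)^2$ is on average of size $p$ and no saving is available from the size of the summand alone — the cancellation must come entirely from the twist by $\chi(u)$. I would interpret $u\mapsto H(u)$ as (minus) the trace of Frobenius of the rank-two lisse sheaf $\mathcal{F}=R^1\pi_*\overline{\mathbb{Q}}_\ell$ attached to the Legendre family on $\mathbb{P}^1\setminus\{0,1,\infty\}$, so that $H(u)^2=\operatorname{tr}(\operatorname{Frob}_u\mid\operatorname{Sym}^2\mathcal{F})+p$, since $\wedge^2\mathcal{F}$ is geometrically trivial (Frobenius acting by $p$). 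Because $\sum_u\chi(u)=0$, the trivial part drops out, and $W$ becomes the sum over $u$ of the weight-two sheaf $\operatorname{Sym}^2\mathcal{F}$ twisted by the Kummer sheaf $\mathcal{L}_\chi$. As the geometric monodromy of the Legendre family is $\mathrm{SL}_2$, the sheaf $\mathcal{G}=\mathcal{L}_\chi\otimes\operatorname{Sym}^2\mathcal{F}$ is geometrically irreducible, nontrivial, of rank three, so its geometric coinvariants vanish, forcing $H^0_c=H^2_c=0$ on the affine curve; Deligne's bound (Weil II) applied to the remaining $H^1_c$, which is mixed of weight $\le 3$ and of bounded dimension, then gives $W=O(p^{3/2})$. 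Combining the pieces, $\sum_{c=1}^{p-1}E(c)^2=p^2+O(p^{3/2})$, whence $\Sigma=2p^2+p^2+O(p^{3/2})=3p^2+O(p^{3/2})$, as claimed. In spirit the estimate for $W$ is the same algebraic-geometric input underlying Theorem~\ref{lemma-sum-1}, and one could alternatively package the entire off-diagonal bound as an appeal to that method.
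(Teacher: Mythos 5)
Your argument is correct, and its skeleton coincides with the paper's: both isolate $c\equiv\pm1$ (contributing $2(p-3)^2=2p^2+O(p)$), both extract a diagonal main term of size $p^2+O(p)$, and both reduce everything else to the single twisted second moment $W=\sum_u\chi(u)H(u)^2$ of the Legendre trace function --- this is exactly the quantity $T'=\sum_c\rho(c)\phi^2(c)$ appearing in the paper's proof --- which is then bounded by $O(p^{3/2})$ using $\ell$-adic cohomology. The one genuine divergence is how that last bound is justified: the paper cites the quasi-orthogonality estimate (5.3) of Fouvry--Kowalski--Michel--Sawin for the pair $\mathcal{F}\otimes\mathcal{L}_\rho$ and $\mathcal{F}$, checking non-isomorphism via the local monodromy at $0$, whereas you write $H(u)^2=\operatorname{tr}\bigl(\mathrm{Frob}_u\mid\operatorname{Sym}^2\mathcal{F}\bigr)+p$, kill the determinant part using $\sum_u\chi(u)=0$, and apply Weil II to the geometrically irreducible rank-three sheaf $\mathcal{L}_\chi\otimes\operatorname{Sym}^2\mathcal{F}$, with irreducibility supplied by the $\mathrm{SL}_2$ monodromy of the Legendre family. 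The two justifications are equivalent in substance --- your $\operatorname{Sym}^2$ decomposition is in effect a proof of the quasi-orthogonality statement the paper quotes --- but yours is more self-contained and makes the source of the cancellation explicit. The elementary reductions are organized differently (you square $S(c)$ and compute the second moment of $E(c)$; the paper peels off $1+\rho$ factors one variable at a time), but they amount to the same bookkeeping. One small slip: for $t_1\neq t_2$ the completed inner sum is $-1-\chi(t_1t_2)+F(t_1,t_2)$ rather than $-2+F(t_1,t_2)$; this is harmless, since that piece still contributes only $O(p)$ after summation over $t_1,t_2$.
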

\begin{theorem}\label{sum-2} Let $p$ be an odd prime. For $a\in\mathbb{F}_p^{\times}$, we have
	\begin{align*}
	\sum_{b=1}^{p-1}\sum_{c=1}^{p-1}\left(\frac{b^2-a^2c^2}{p}\right)\left(\frac{b^2-1}{p}\right)\left(\frac{c^2-1}{p}\right)=O(p).
	\end{align*}
\end{theorem}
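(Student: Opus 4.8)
The plan is to combine the multiplicativity of the Legendre symbol with a single-variable ``complete-the-square'' evaluation, so as to isolate one genuinely two-dimensional sum that must be handled by algebraic geometry. Write $\chi(\cdot)=\left(\frac{\cdot}{p}\right)$, so the summand equals $\chi\big((b^2-a^2c^2)(b^2-1)(c^2-1)\big)$. Since this polynomial is even in each of $b$ and $c$, I would substitute $u=b^2$, $v=c^2$, using $\#\{b\in\mathbb{F}_p^{\times}:b^2=u\}=1+\chi(u)$, to obtain exactly
\begin{align*}
\sum_{b,c=1}^{p-1}\chi\big((b^2-a^2c^2)(b^2-1)(c^2-1)\big)=\sum_{u,v\in\mathbb{F}_p^{\times}}(1+\chi(u))(1+\chi(v))\,\chi\big((u-a^2v)(u-1)(v-1)\big).
\end{align*}
Expanding $(1+\chi(u))(1+\chi(v))$ splits the sum into four pieces $T_1+T_2+T_3+T_4$, according to the extra factor $1$, $\chi(u)$, $\chi(v)$, or $\chi(uv)$.

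Three of the four pieces are elementary. In $T_1$ the variable $u$ enters only through the quadratic $(u-a^2v)(u-1)$; in $T_2$ (factor $\chi(u)$) the variable $v$ enters only through the quadratic $(u-a^2v)(v-1)$; and in $T_3$ (factor $\chi(v)$) the variable $u$ enters only through $(u-a^2v)(u-1)$. Performing the inner sum in the ``quadratic'' variable via the standard evaluation $\sum_{t\in\mathbb{F}_p}\chi\big((t-\alpha)(t-\beta)\big)=p-1$ if $\alpha=\beta$ and $-1$ otherwise, each inner sum yields a constant $-1$ plus a term of size $p$ supported on a single coincidence locus ($v=a^{-2}$, $u=a^2$, respectively). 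Summing the constant part over the remaining variable produces one-variable character sums that are $O(\sqrt p)$, while the coincidence term contributes $O(p)$; hence $T_1,T_2,T_3$ are each $O(p)$. Restricting $t$ to $\mathbb{F}_p^{\times}$ rather than $\mathbb{F}_p$ perturbs these only by $O(p)$, so it does not affect the conclusion.

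The entire difficulty is concentrated in $T_4=\sum_{u,v\in\mathbb{F}_p^{\times}}\chi\big(uv(u-1)(v-1)(u-a^2v)\big)$, where $u$ and $v$ each appear to degree three, so summing in either variable alone produces only a cubic character sum of size $O(\sqrt p)$ and hence the trivial bound $O(p^{3/2})$. This is precisely where algebraic geometry must enter. I would regard $F(u,v)=uv(u-1)(v-1)(u-a^2v)$, which is squarefree over $\overline{\mathbb{F}_p}$, as defining the affine surface $w^2=F(u,v)$, and estimate the two-dimensional sum $\sum_{u,v}\chi(F(u,v))$ by the Weil--Deligne bound for multi-variable character sums, equivalently by counting $\mathbb{F}_p$-points on this surface. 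Concretely, summing in $v$ first realizes $-\sum_v\chi\big(v(v-1)(u-a^2v)\big)$ as the trace of Frobenius of the Legendre-type elliptic curve $E_u$, so that $T_4$ becomes a sum over the one-parameter family $\{E_u\}$ weighted by $\chi(u(u-1))$; the needed cancellation then follows from the fact that the associated $\ell$-adic sheaf is geometrically nontrivial with no trivial subquotient. The main obstacle is verifying this nondegeneracy, that is, geometric irreducibility of the double cover (equivalently, that $F$ is not a square in $\overline{\mathbb{F}_p}(u)[v]$), so that Deligne's estimate applies with no spurious main term and gives $T_4=O(p)$; adding the four bounds then yields the stated estimate $O(p)$.
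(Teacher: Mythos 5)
Your reduction is, in substance, the same as the paper's: after removing the squares via the factors $1+\chi(u)$ and $1+\chi(v)$ and evaluating the quadratic inner sums elementarily, everything comes down to the single sum
$T_4=\sum_{u,v}\chi\bigl(uv(u-1)(v-1)(u-a^2v)\bigr)=\sum_{v}\chi(v(v-1))\,\phi(a^2v)$ with $\phi(t)=\sum_{u}\chi(u(u-1)(u-t))$, which is exactly the sum $R'$ that the paper isolates, and your treatment of $T_1,T_2,T_3$ is correct. The gap is at the one step that carries all the content: you assert $T_4=O(p)$ but do not verify the nondegeneracy that makes this true, and the concrete criterion you offer as ``equivalent'' --- that $F$ is not a square in $\overline{\mathbb{F}_p}(u)[v]$ --- is not the right condition. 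Squarefreeness of $F$ in $v$ only gives the pointwise Weil bound $O(\sqrt p)$ for each fixed $u$, i.e.\ exactly the trivial estimate $O(p^{3/2})$ you already dismissed; irreducibility of the double cover $w^2=F(u,v)$ is likewise insufficient in general (for instance $F(u,v)=f(u+v)$ with $f$ a squarefree cubic gives an irreducible cover but a sum of size $p^{3/2}$). Note also that the off-the-shelf Weil--Deligne bound for $\sum_{u,v}\chi(F)$ requires a nondegenerate leading form, whereas here the leading form is $u^2v^2(u-a^2v)$, so the ``count points on the surface'' route is not automatic either.

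What actually closes the argument --- and what the paper does --- is a statement about the one-parameter family rather than about $F$ itself: $\phi$ is the trace function of the geometrically irreducible rank-$2$ sheaf $\mathcal F$ attached to the Legendre family (pulled back by a multiplicative translation), $\chi(v(v-1))$ is the trace function of a rank-$1$ Kummer-type sheaf, and a rank-$1$ sheaf can never be geometrically isomorphic to the dual of a geometrically irreducible rank-$2$ sheaf; the quasi-orthogonality relation \cite[(5.3)]{FKMS}, with conductors bounded independently of $p$, then yields $\sum_v\chi(v(v-1))\phi(a^2v)=O(p)$. So your plan is the right one and the verification you defer is in fact short (a rank count), but as written the proposal stops exactly where the proof begins, and the substitute condition you propose would not suffice.
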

 We use the above three estimates to find an asymptotic formula for the $\nth{6}$ order moment in case of $p\equiv 1 \pmod 4$ and an asymptotic formula for the $\nth{8}$ order moment of generalized quadratic Gauss sums weighted by $L$-functions. To be specific, we prove the following two main theorems.
\begin{theorem}\label{MT-1}
 Let $p$ be an odd prime satisfying $p\equiv 1\pmod 4$. For any integer $n$ with $\gcd(n,p)=1$, we have the asymptotic formula
 \begin{equation}
 \sum_{\chi\neq\chi_0}|G(n,\chi;p)|^6\cdot|L(1,\chi)|=10\cdot C \cdot p^4+O(p^{7/2}\cdot\ln p),\notag
 \end{equation}
 where $C$ is as given in \eqref{constant-c}.
\end{theorem}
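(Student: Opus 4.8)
The plan is to reduce the weighted sixth moment to a short linear combination of mean values of powers of the character sum
$$A(\chi) := \sum_{c=1}^{p-1}\chi(c)\left(\frac{c^2-1}{p}\right),$$
weighted by $|L(1,\chi)|$, and then to isolate the only genuinely new ingredient, a cubic mean value controlled by Theorem \ref{sum-2}. First I would record that $G(n,\chi;p)=0$ whenever $\chi$ is odd (pair $a$ with $-a$), so only even characters contribute. For even non-principal $\chi$ and $\gcd(n,p)=1$, opening $|G(n,\chi;p)|^2$, substituting $a=bc$, and evaluating the inner quadratic Gauss sum $\sum_b e(nb^2(c^2-1)/p)$ by the classical formula gives, for $p\equiv 1\pmod 4$ (so the Gauss factor $\epsilon_p=1$), the identity
$$|G(n,\chi;p)|^2 = 2p + \left(\frac np\right)\sqrt p\,A(\chi).$$
Cubing and using $\left(\frac np\right)^2=1$ yields
$$|G(n,\chi;p)|^6 = 8p^3 + 12\left(\frac np\right)p^{5/2}A(\chi) + 6p^2 A(\chi)^2 + \left(\frac np\right)p^{3/2}A(\chi)^3.$$

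Next I would multiply by $|L(1,\chi)|$ and sum over the even non-principal characters, writing
$$\sum_{\chi\neq\chi_0}|G(n,\chi;p)|^6|L(1,\chi)| = 8p^3 S_0 + 12\left(\frac np\right)p^{5/2}S_1 + 6p^2 S_2 + \left(\frac np\right)p^{3/2}S_3,$$
where $S_k:=\sum_{\chi\ \mathrm{even},\ \chi\neq\chi_0}A(\chi)^k|L(1,\chi)|$. The three lower sums are exactly the quantities already controlled by the second and fourth moment computations: the first-power mean value of $|L(1,\chi)|$ gives $S_0 = \tfrac C2\,p + O(p^{1/2}\ln p)$, and comparing those moment formulas with the expansion above yields $S_1 = O(p\ln p)$ and $S_2 = Cp^2 + O(p^{3/2}\ln p)$. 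Substituting these produces the two surviving contributions $8p^3\cdot\tfrac C2 p = 4Cp^4$ and $6p^2\cdot Cp^2 = 6Cp^4$, whose sum is the predicted $10Cp^4$, together with errors of size $O(p^{7/2}\ln p)$.

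It therefore remains to show $\left(\frac np\right)p^{3/2}S_3$ is also absorbed into the error, i.e. that $S_3=O(p^2\ln p)$, and this is where Theorem \ref{sum-2} enters. Expanding $A(\chi)^3=\sum_{b,c,d}\chi(bcd)\left(\frac{b^2-1}p\right)\left(\frac{c^2-1}p\right)\left(\frac{d^2-1}p\right)$ and interchanging summation reduces $S_3$ to $\sum_{b,c,d}\left(\frac{b^2-1}p\right)\left(\frac{c^2-1}p\right)\left(\frac{d^2-1}p\right)\Sigma(bcd)$, with $\Sigma(m)=\sum_{\chi\ \mathrm{even}}\chi(m)|L(1,\chi)|$. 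After replacing $|L(1,\chi)|$ by a suitably truncated expansion and applying orthogonality of the even characters, the resulting congruence forces $bcd\equiv\pm a\pmod p$ for a parameter $a$ coming from the $L$-weight; collapsing the variable $d$ turns the inner sum into $\sum_{b,c}\left(\frac{b^2-1}p\right)\left(\frac{c^2-1}p\right)\left(\frac{a^2-b^2c^2}p\right)$, which is precisely the sum of Theorem \ref{sum-2} and hence $O(p)$ \emph{uniformly in $a$, including the diagonal case $a\equiv\pm1$}. The decisive point is that $S_3$ thereby carries \emph{no} main term; summing the $O(p)$ bound against the weights attached to the $L$-expansion (which contribute a factor $O(\ln p)$) gives $S_3=O(p^2\ln p)$ and so $\left(\frac np\right)p^{3/2}S_3=O(p^{7/2}\ln p)$.

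The main obstacle is the rigorous treatment of the weight $|L(1,\chi)|$: since it is the first power of an absolute value rather than a Dirichlet series, one cannot directly invoke the approximate functional equation, and the passage to the character sums above must proceed through a careful truncation together with the known first-power mean value of $|L(1,\chi)|$, which is the source of the constant $C$. Once that analytic framework is in place, the entire difficulty of the sixth moment for $p\equiv1\pmod4$ is concentrated in the cancellation exhibited by Theorem \ref{sum-2}: it is exactly the absence of a main term in the triple Legendre-symbol sum that prevents $S_3$ from disturbing the leading coefficient $10C$, in contrast to the quartic sums of Theorems \ref{lemma-sum-1} and \ref{lem-T}, whose surviving main term $3p^2$ is what will instead drive the eighth moment.
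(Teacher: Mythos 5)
Your proposal is correct and follows essentially the same route as the paper: reduce to even non-principal characters, cube the identity $|G(n,\chi;p)|^2=2p+\left(\frac{n}{p}\right)\sqrt{p}\,A(\chi)$ (valid since $G(1;p)=\sqrt{p}$ for $p\equiv 1\pmod 4$), extract the main terms $4Cp^4+6Cp^4$ from $S_0$ and $S_2$, and dispose of $S_3$ by rewriting $A(\chi)^3=\sum_a\chi(a)\sum_{b,c}\left(\frac{a^2-b^2c^2}{p}\right)\left(\frac{b^2-1}{p}\right)\left(\frac{c^2-1}{p}\right)$ and invoking Theorem \ref{sum-2} uniformly in $a$ (including $a=\pm 1$) together with the mean value $\sum_a\bigl|\sum_{\chi\neq\chi_0}\chi(a)|L(1,\chi)|\bigr|=O(p\ln p)$. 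One small caution: obtaining $S_1$ and $S_2$ by ``comparison with the lower weighted moments'' inherits the $\ln^2 p$ error factors of Zhang's second and fourth moment formulas and would only yield $O(p^{7/2}\ln^2 p)$; to reach the stated $O(p^{7/2}\ln p)$ you should bound $S_1$ directly by the mean value just quoted and $S_2$ via the Weil estimate plus that same mean value, which is exactly what the paper does.
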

\begin{theorem}\label{MT-2}
 Let $p$ be an odd prime. For any integer $n$ with $\gcd(n,p)=1$, we have the asymptotic formula
 \begin{align*}
 \sum_{\chi\neq\chi_0}|G(n,\chi;p)|^8\cdot|L(1,\chi)|=35\cdot C\cdot p^5+O(p^{9/2}\cdot\ln p),
 \end{align*} 
where $C$ is as given in \eqref{constant-c}.
\end{theorem}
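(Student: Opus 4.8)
The plan is to reduce everything to character sums of the three types estimated in Theorems \ref{lemma-sum-1}--\ref{sum-2}. First I would note that $G(n,\chi;p)=0$ whenever $\chi$ is odd (replacing $a$ by $-a$ gives $G=\chi(-1)G$), so only even non-principal characters contribute. For even $\chi$ one has the standard quadratic Gauss sum evaluation
\[
|G(n,\chi;p)|^2 = 2p + \left(\frac np\right)\epsilon_p\sqrt p\,A(\chi),\qquad A(\chi):=\sum_{a=1}^{p-1}\chi(a)\left(\frac{a^2-1}{p}\right),
\]
where $\epsilon_p^2=\left(\frac{-1}{p}\right)$. The decisive algebraic fact is that $A(\chi)=\left(\frac{-1}{p}\right)\overline{A(\chi)}$, whence $\bigl[\left(\frac np\right)\epsilon_p\sqrt p\,A(\chi)\bigr]^2=p\,|A(\chi)|^2$ is real and \emph{independent of $p\bmod 4$}; this is exactly why Theorem \ref{MT-2} holds for all odd $p$, in contrast to the sixth-moment case.

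Next I would raise this identity to the fourth power and sum over even $\chi\neq\chi_0$ against $|L(1,\chi)|$. Writing $B=\left(\frac np\right)\epsilon_p\sqrt p$ and expanding $(2p+BA(\chi))^4$ binomially, the even-index terms collapse via $B^2A(\chi)^2=p\,|A(\chi)|^2$ into
\[
\sum_{\chi\neq\chi_0}|G(n,\chi;p)|^8\,|L(1,\chi)| = 16p^4 M_0 + 24p^3\widetilde M_2 + p^2\widetilde M_4 + (\text{odd-}k\text{ terms}),
\]
with $M_0=\sum_{\chi\text{ even}}|L(1,\chi)|$ and $\widetilde M_{2j}=\sum_{\chi\text{ even}}|A(\chi)|^{2j}|L(1,\chi)|$. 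Using the hybrid mean-value estimate for the $|L(1,\chi)|$-weight twisted by $\chi(r)$ (as developed by Zhang for the lower moments: its main term is proportional to $C$ and supported on $r\equiv\pm1$), each $\widetilde M_{2j}$ decouples as $C$ times the unweighted moment $\sum_{\chi\text{ even}}|A(\chi)|^{2j}$, which by even-character orthogonality equals $\tfrac{p-1}{2}$ times the diagonal Legendre sum $D_{2j}=\sum_{bb'\equiv\pm cc'}\prod\left(\frac{\cdot^2-1}{p}\right)$. A short parametrization through a central variable turns $D_4$ into exactly the sum of Theorem \ref{lem-T}, giving $D_4=6p^2+O(p^{3/2})$, while $D_2=2p-6$ is elementary. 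Thus $M_0\sim\frac C2 p$, $\widetilde M_2\sim Cp^2$, $\widetilde M_4\sim 3Cp^3$, and the main term assembles as $8Cp^5+24Cp^5+3Cp^5=35Cp^5$.

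Finally I would bound the errors. The odd-index terms $k=1,3$ carry an odd power of $A(\chi)$ whose diagonal, after eliminating one variable, reduces to the triple sum of Theorem \ref{sum-2}; its $O(p)$ bound makes the $k=3$ contribution $O(p\cdot p^{3/2}\cdot p^2)=O(p^{9/2})$ and the $k=1$ contribution smaller. The remaining error stems from the imperfect orthogonality of the weight: in $\widetilde M_4$, the off-diagonal residues $r\not\equiv\pm1$ survive, and these reduce to the generic character sums of Theorem \ref{lemma-sum-1}, whose $O(p^{3/2})$ bound, summed over the auxiliary parameter, contributes $O(p^{5/2})$ to $\widetilde M_4$ and hence $O(p^{9/2})$ overall, with the logarithmic loss $\ln p$ coming from the mean value of $|L(1,\chi)|$. \textbf{The main obstacle} is precisely this last step: since the $p^2$ prefactor on $\widetilde M_4$ amplifies any slack, the weighted off-diagonal must be controlled with genuine cancellation rather than by absolute values, and it is here that the full strength $O(p^{3/2})$ of Theorem \ref{lemma-sum-1}—not a trivial bound—is indispensable, alongside a careful treatment of the non-multiplicative weight $|L(1,\chi)|$.
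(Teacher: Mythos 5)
Your proposal is correct and follows essentially the same route as the paper: expand $|G(n,\chi;p)|^8=(2p+G(1;p)(\tfrac np)A(\chi))^4$ via Zhang's identity, extract the main terms from the diagonal ($r\equiv\pm1$) contributions using $\sum_{\chi(-1)=1}'|L(1,\chi)|\sim\tfrac12Cp$ together with the diagonal counts $2(p-3)$ and $6p^2+O(p^{3/2})$ (the latter via Theorem \ref{lem-T}), and bound the odd-power and off-diagonal terms by Theorems \ref{lemma-sum-1}, \ref{sum-2} combined with $\sum_a|\sum_{\chi\neq\chi_0}\chi(a)|L(1,\chi)||=O(p\ln p)$. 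Your observation that $B^2A(\chi)^2=p|A(\chi)|^2$ is a clean way of packaging the sign cancellation between $G(1;p)^2=(\tfrac{-1}{p})p$ and $A(\chi)^2=(\tfrac{-1}{p})|A(\chi)|^2$ that the paper carries out term by term, and which explains why no congruence condition on $p$ is needed here.
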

Combining the results proved in \cite{yuan, zhang}, it readily follows that Conjecture \ref{C1} is true when $m=1, 2$. He and Liao \cite[Theorem 2]{yuan} evaluated the sum $\displaystyle  \sum_{\chi\hspace{-.12cm}\mod p}|G(n,\chi;p)|^6$ for any integer $n$ with $\gcd(n,p)=1$. They proved that
\begin{align*}
\sum_{\chi\hspace{-.14cm}\mod p}|G(n,\chi;p)|^6=
\left\{
\begin{array}{ll}
(p-1)(10p^3-25p^2-16p-1)+(p\sqrt{p}(p-1)N\\
+18p^2\sqrt{p}-12p\sqrt{p}-6\sqrt{p})\left(\frac{n}{p}\right), & \hspace{-1.8cm} \hbox{if $p\equiv 1 \pmod 4$;} \\
(p-1)(10p^3-25p^2-4p-1), & \hspace{-1.8cm} \hbox{if $p\equiv 3 \pmod 4$,}
\end{array}
\right.
\end{align*}
where 
\begin{align}\label{X1}
N=\sum_{a=2}^{p-2}\sum_{c=1}^{p-1}\left(\frac{a^2-c^2}{p}\right)\left(\frac{c^2-1}{p}\right)\left(\frac{a^2-1}{p}\right).
\end{align}
In this article we find an asymptotic formula for the character sum $N$ and obtain an improved estimate of He and Liao's result as given below.
\begin{theorem}\label{MT-3}
	Let $p$ be an odd prime and $n$ be any integer with $\gcd(n,p)=1$. Then we have
	\begin{align*}
	\sum_{\chi\hspace{-.14cm} \mod p} \left|G(n,\chi;p)\right|^6
	=\begin{cases}
	10p^4+O(p^{7/2}),&\text{if}~p\equiv 1\pmod 4;\\
	(p-1)(10p^3-25p^2-4p-1),&\text{if}~p\equiv 3\pmod 4.
	\end{cases}
	\end{align*}
\end{theorem}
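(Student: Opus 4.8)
The plan is to deduce both branches directly from the explicit evaluation of $\sum_{\chi \hspace{-.12cm}\mod p}|G(n,\chi;p)|^6$ due to He and Liao quoted above, so that the only genuine analytic input needed is an estimate for the character sum $N$ defined in \eqref{X1}. The case $p\equiv 3\pmod 4$ requires no work at all: He and Liao's formula already furnishes the \emph{exact} closed form $(p-1)(10p^3-25p^2-4p-1)$, which is precisely the second branch of the asserted formula. Hence the entire substance of Theorem \ref{MT-3} lies in the case $p\equiv 1\pmod 4$, where the right-hand side of He and Liao's identity carries the additional term $\bigl(p\sqrt{p}(p-1)N + 18p^{2}\sqrt{p} - 12p\sqrt{p} - 6\sqrt{p}\bigr)\left(\frac{n}{p}\right)$ that must be absorbed into an error of size $O(p^{7/2})$.

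First I would expand the polynomial main term, $(p-1)(10p^3-25p^2-16p-1)=10p^4+O(p^3)$, so that the leading contribution $10p^4$ appears and its remainder already lies well within $O(p^{7/2})$. The key step is then to estimate $N$. I would observe that the summand of $N$ contains the factor $\left(\frac{a^2-1}{p}\right)$, which vanishes whenever $a\equiv\pm 1\pmod p$; consequently the terms $a=1$ and $a=p-1$ contribute nothing, and the outer range of $a$ may be extended from $\{2,\dots,p-2\}$ to $\{1,\dots,p-1\}$ without altering the value of the sum. After renaming the summation variable $a$ to $b$, this exhibits $N$ as exactly the character sum appearing in Theorem \ref{sum-2} specialized to $a=1$ (the boundary terms $b=\pm 1$ again vanish, consistently). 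Applying Theorem \ref{sum-2} therefore yields $N=O(p)$.

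It remains to track orders of magnitude. Since $\bigl|\left(\frac{n}{p}\right)\bigr|\le 1$, the bound $N=O(p)$ gives $p\sqrt{p}(p-1)N=O(p^{7/2})$, while $18p^{2}\sqrt{p}-12p\sqrt{p}-6\sqrt{p}=O(p^{5/2})$; both contributions, together with the $O(p^3)$ remainder from the main term, are dominated by $p^{7/2}$. Collecting everything yields $\sum_{\chi\hspace{-.12cm}\mod p}|G(n,\chi;p)|^6=10p^4+O(p^{7/2})$ for $p\equiv 1\pmod 4$, completing the proof. The only point requiring any care is the identification of $N$ with the sum of Theorem \ref{sum-2}; once $N=O(p)$ is in hand, the rest is elementary bookkeeping of exponents, the deep work having been front-loaded into the proof of Theorem \ref{sum-2}, which we are entitled to assume here.
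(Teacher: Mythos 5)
Your proposal is correct and follows exactly the paper's own route: extend the range of $a$ in $N$ using the vanishing of $\left(\frac{a^2-1}{p}\right)$ at $a\equiv\pm1$, identify $N$ with the sum of Theorem \ref{sum-2} at $a=1$ to get $N=O(p)$, and substitute into He and Liao's explicit formula. The bookkeeping of exponents is also as in the paper, so there is nothing further to add.
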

From the works of He and Liao \cite{yuan} and Zhang \cite{zhang}, it follows that Conjecture \ref{C1} is true when $m=3$ and $p\equiv 3\pmod{4}$. Using Theorem \ref{MT-1} and Theorem \ref{MT-3} we now readily find that Conjecture \ref{C1} is also true when $m=3$ and $p\equiv 1\pmod{4}$.  
\par 
He and Liao \cite[Theorem 3]{yuan} also evaluated the sum $\displaystyle  \sum_{\chi\hspace{-.12cm}\mod p}|G(n,\chi;p)|^8$ for any integer $n$ with $\gcd(n,p)=1$. They proved that
\begin{align*}
&\sum_{\chi\hspace{-.14cm}\mod p}|G(n,\chi;p)|^8\\
&=
\left\{
\begin{array}{ll}
(p-1)(34p^4-99p^3-65p^2-29p-1)\\+(56p^3\sqrt{p}+8p^2\sqrt{p}-56p\sqrt{p}-8\sqrt{p}+8p^2\sqrt{p}(p-1)N)\left(\frac{n}{p}\right)\\+p^2(p-1)T, 
& \hspace{-1.8cm} \hbox{if $p\equiv 1 \pmod 4$;} \\
(p-1)(34p^4-99p^3+7p^2-5p-1)+p^2(p-1)T, & \hspace{-1.8cm} \hbox{if $p\equiv 3 \pmod 4$,}
\end{array}
\right.
\end{align*}
where $N$ is the same as \eqref{X1} and 
\begin{align}\label{X2}
T=\sum_{a=2}^{p-2}\sum_{b=1}^{p-1}\sum_{d=1}^{p-1}\left(\frac{a^2-b^2}{p}\right)\left(\frac{b^2-1}{p}\right)\left(\frac{a^2-d^2}{p}\right)\left(\frac{d^2-1}{p}\right).
\end{align}
In this article we find an asymptotic formula for the character sum $T$ and obtain an improved estimate of He and Liao's result as given below. 
\begin{theorem}\label{MT-4}
	Let $p$ be an odd prime and $n$ be any integer with $\gcd(n,p)=1$. Then we have
	\begin{align*}
	\sum_{\chi\hspace{-.14cm}\mod p} \left|G(n,\chi;p)\right|^8
	=35p^5+O(p^{9/2}).
	\end{align*}
\end{theorem}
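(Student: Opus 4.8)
The plan is to take He and Liao's exact evaluation of $\sum_{\chi\bmod p}|G(n,\chi;p)|^8$ quoted above as the starting point and to feed into it sharp information about the only two genuinely arithmetic quantities appearing there, namely the sums $N$ and $T$ of \eqref{X1} and \eqref{X2}. The prefactor of $T$ is $p^2(p-1)\sim p^3$, so recovering the main term $35p^5$ with error $O(p^{9/2})$ requires knowing $T$ to the precision $T=p^2+O(p^{3/2})$; the prefactor of $N$ is $8p^2\sqrt p(p-1)\sim 8p^{7/2}$, so here a mere upper bound $N=O(p)$ already pushes the contribution of $N$ below $p^{9/2}$. Everything then reduces to extracting these two facts from Theorems \ref{lem-T} and \ref{sum-2}.

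To handle $T$, I would first observe that for each fixed $a$ the summand factorizes, so that $T=\sum_{a=2}^{p-2}S(a)^2$ with $S(a)=\sum_{x=1}^{p-1}\left(\frac{a^2-x^2}{p}\right)\left(\frac{x^2-1}{p}\right)$. The triple sum in Theorem \ref{lem-T} factorizes in exactly the same fashion; since $\left(\frac{x^2-a^2}{p}\right)=\left(\frac{-1}{p}\right)\left(\frac{a^2-x^2}{p}\right)$ and the sign disappears upon squaring, that theorem is precisely the statement $\sum_{c=1}^{p-1}S(c)^2=3p^2+O(p^{3/2})$. Hence $T=\sum_{c=1}^{p-1}S(c)^2-S(1)^2-S(p-1)^2$. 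A direct computation gives $S(1)=\left(\frac{-1}{p}\right)(p-3)$, because $(1-x^2)(x^2-1)=-(x^2-1)^2$ is a nonzero square times $-1$ away from the two zeros $x=\pm1$, and $S(p-1)=S(1)$ since $S$ depends on $a$ only through $a^2$. Therefore $S(1)^2+S(p-1)^2=2(p-3)^2$, and $T=3p^2+O(p^{3/2})-2(p-3)^2=p^2+O(p^{3/2})$.

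For $N$ the reduction is even shorter. Extending the outer range from $\{2,\dots,p-2\}$ to $\{1,\dots,p-1\}$ costs nothing, since the two new terms $a=1$ and $a=p-1$ each carry the factor $\left(\frac{a^2-1}{p}\right)=0$. After renaming $a$ as $b$, the resulting full double sum is exactly the sum of Theorem \ref{sum-2} with parameter $a=1$, whence $N=O(p)$ (a bound already used in establishing Theorem \ref{MT-3}). Substituting $T=p^2+O(p^{3/2})$ and $N=O(p)$ into He and Liao's formula, the polynomial part $(p-1)(34p^4-\cdots)$ contributes $34p^5+O(p^4)$, the term $p^2(p-1)T$ contributes $p^5+O(p^{9/2})$, and in the case $p\equiv1\pmod4$ the remaining bracket multiplied by $\left(\frac{n}{p}\right)$ is $O(p^{9/2})$ (its largest pieces being $56p^{7/2}$ and $8p^2\sqrt p(p-1)N=O(p^{9/2})$). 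Both residue classes thus yield $35p^5+O(p^{9/2})$.

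The analytically hard work lives entirely in Theorems \ref{lemma-sum-1}--\ref{sum-2}, which I am free to assume, so the present argument is essentially bookkeeping. The one point that must be handled with care is the passage from the full range in Theorem \ref{lem-T} to the restricted range defining $T$: the excluded boundary terms $S(\pm1)^2\sim p^2$ are \emph{not} of lower order than the main term $3p^2$, and it is exactly the subtraction $3p^2-2(p-3)^2=p^2$ that pins down the leading constant. Miscounting these two terms would change the coefficient of $p^5$ away from the correct value $34+1=35$, so verifying $S(\pm1)=\pm(p-3)$ precisely is the crux of the computation.
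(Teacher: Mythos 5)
Your proposal is correct and follows essentially the same route as the paper: substitute $N=O(p)$ and $T=p^2+O(p^{3/2})$ into He and Liao's exact formula. The only cosmetic difference is that the paper obtains $T=p^2+O(p^{3/2})$ as an intermediate step (equation \eqref{bdd-T}) inside the proof of Theorem \ref{lem-T}, whereas you correctly recover it from the \emph{statement} of Theorem \ref{lem-T} by subtracting the boundary contributions $S(\pm 1)^2=2(p-3)^2$ — a computation you carry out accurately.
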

Combining Theorem \ref{MT-2} and Theorem \ref{MT-4} we find that Conjecture \ref{C1} is also true when $m=4$.
\section{Proof of Theorems \ref{lemma-sum-1}, \ref{lem-T}, and \ref{sum-2}}
In this section we prove Theorems \ref{lemma-sum-1}, \ref{lem-T}, and \ref{sum-2}. These three results play crucial role in the proof of our main results. Our proofs rely on certain techniques from Algebraic Geometry. Throughout this section, we assume $\rho(t):=\left(\frac{t}{p}\right)$.
\begin{proof}[Proof of Theorem \ref{lemma-sum-1}]
 Let
\begin{align*}
S:=\sum_{b=1}^{p-1}\sum_{c=1}^{p-1}\sum_{d=1}^{p-1}\rho(b^2-a^2c^2)\rho(b^2-1)\rho(d^2-c^2)\rho(d^2-1),
\end{align*}
where $a\in\mathbb{F}_p\setminus\{0,\pm 1\}$.
We can rewrite the above sum as 
\begin{align}
S&=\sum_{b=1}^{p-1}\sum_{c=1}^{p-1}\sum_{d=1}^{p-1}\rho(b-a^2c^2)\rho(b-1)\rho(d^2-c^2)\rho(d^2-1)\#\{x:x^2=b\}\notag\\
&=\sum_{b=1}^{p-1}\sum_{c=1}^{p-1}\sum_{d=1}^{p-1}\rho(b-a^2c^2)\rho(b-1)\rho(d^2-c^2)\rho(d^2-1)(1+\rho(b))\notag\\
&=S_1+S_2\notag,
\end{align}
where
\begin{align*}
S_1&=\sum_{b=1}^{p-1}\sum_{c=1}^{p-1}\sum_{d=1}^{p-1}\rho(b-a^2c^2)\rho(b-1)\rho(d^2-c^2)\rho(d^2-1)
\end{align*}
and 
\begin{align*}
S_2&=\sum_{b=1}^{p-1}\sum_{c=1}^{p-1}\sum_{d=1}^{p-1}\rho(b-a^2c^2)\rho(b-1)\rho(b)\rho(d^2-c^2)\rho(d^2-1).
\end{align*}
Now 
\begin{align*}
S_1&=\sum_{c=1}^{p-1}\sum_{d=1}^{p-1}\rho(d^2-c^2)\rho(d^2-1)\sum_{b=1}^{p-1}\rho(b-a^2c^2)\rho(b-1)\\
&=\sum_{c=1}^{p-1}\sum_{d=1}^{p-1}\rho(d^2-c^2)\rho(d^2-1)\sum_{b=1}^{p}\rho(b-a^2c^2)\rho(b-1)\\
&\qquad\qquad\qquad-\sum_{c=1}^{p-1}\sum_{d=1}^{p-1}\rho(d^2-c^2)\rho(d^2-1)\\
&=\sum_{c=1}^{p-1}\sum_{d=1}^{p-1}\rho(d^2-c^2)\rho(d^2-1)\sum_{b=1}^{p}\rho(b-a^2c^2)\rho(b-1)+O(p^{3/2}).
\end{align*}
The inner sum is $-1$ if $ac\neq\pm 1$ and $p-1$ if $ac=\pm 1$, and hence
\begin{align*}
S_1=-\hspace{-.5cm}\sum_{c=1,c\neq \pm a^{-1}}^{p-1}\sum_{d=1}^{p-1}\rho(d^2-c^2)\rho(d^2-1)+2(p-1)\sum_{d=1}^{p-1}\rho(d^2-a^{-2})\rho(d^2-1)+O(p^{3/2}).
\end{align*}
Both the summands are $O(p^{3/2})$ because neither $(d^2-c^2)(d^2-1)$ nor $(d^2-a^{-2})(d^2-1)$ is a perfect square in $\overline{\mathbb{F}_p}[c,d]$. So we obtain
\begin{align*}
S=S_2+O(p^{3/2}).
\end{align*}
Now we repeat the same argument with $c$ and $d$ on $S_2$, and deduce that 
\begin{align*}
S=S'+O(p^{3/2}),
\end{align*}
where
\begin{align*}
S'&=\sum_{b=1}^{p-1}\sum_{c=1}^{p-1}\sum_{d=1}^{p-1}\rho(b-a^2c)\rho(b-1)\rho(b)\rho(d-c)\rho(d-1)\rho(d)\rho(c)\\
&=\sum_{c=1}^{p-1}\rho(c)\left(\sum_{b=1}^{p-1}\rho(b-a^2c)\rho(b-1)\rho(b)\right)\left(\sum_{d=1}^{p-1}\rho(d-c)\rho(d-1)\rho(d)\right)\\
&=\sum_{c=1}^{p-1}\rho(c)\phi(a^2c)\phi(c),
\end{align*}
considering $\phi(t)=\sum_{b=1}^{p-1}\rho(b-t)\rho(b-1)\rho(b).$ Here $\phi(t)$ is the trace function of the self dual, rank $2$, irreducible sheaf $\mathcal{F}$, which is a 
non-trivial cohomology sheaf of the Legendre family of elliptic curves 
\begin{align*}
y^2=x(x-1)(x-t)
\end{align*}
 and $\rho(c)$ is the trace function of the rank $1$ Kummer sheaf $\mathcal{L}_{\rho}$, associated to the quadratic character. $\mathcal{F}$ is not isomorphic to a twist of $\mathcal{G}:=\alpha^*\mathcal{F}$, where the map $\alpha$ is the multiplicative translation by $a^2$. Both $\mathcal{F}$ and $\mathcal{G}$ are of weight $1$ and we consider trace functions of weight less than or equal to $0$. Hence we normalise both trace functions of $\mathcal{F}$ and $\mathcal{G}$ dividing by $\sqrt{p}$.  
 Here $\mathcal{G}$ and $\mathcal{F}\otimes \mathcal{L}_{\rho}$ both are geometrically irreducible. Also $\mathcal{F}\otimes \mathcal{L}_{\rho}$ is geometrically irreducible because tensoring with one dimensional sheaf preserves geometric irreducibility. In any odd characteristic $\mathcal{F}\otimes \mathcal{L}_{\rho}$ has non-trivial (in fact unipotent) local monodromy at $1$, but is lisse at $1/{a^2}$, whereas the multiplicative translate $\mathcal{G}$ has non-trivial (in fact unipotent) local monodromy at $1/{a^2}$, but is lisse at 1. Here $\mathcal{G}$ is self dual. Hence $\mathcal{F}\otimes \mathcal{L}_{\rho}$ is not geometrically isomorphic to the dual of $\mathcal{G}$. Hence from \cite[(5.3)]{FKMS} we find that 
 \begin{align*}
 \frac{1}{p}\sum_{c=1}^{p-1}\rho(c)\frac{\phi(a^2c)}{\sqrt{p}}\frac{\phi(c)}{\sqrt{p}}&=O\left(\frac{(C(\mathcal{F}))^2(C(\mathcal{G}))^2(C(\mathcal{L}_{\rho}))^2}{\sqrt{p}}\right),
 \end{align*}
 where $C(\mathcal{F})$, $C(\mathcal{G})$ and $C(\mathcal{L}_{\rho})$ are the conductors of the sheaves. We have $C(\mathcal{L}_{\rho})=O(1)$. Now $\mathcal{F}$ is of rank 2 and is at most tamely ramified at $0$, $1$ and $\infty$ and lisse elsewhere, so all the swans are zero and the conductor is bounded by 5. So we have $C(\mathcal{F})=C(\mathcal{G})=O(1)$, which implies 
 \begin{align*}
 \sum_{c=1}^{p-1}\rho(c)\phi(a^2c)\phi(c)&=p^2O\left(\frac{C(\mathcal{F})C(\mathcal{G})C(\mathcal{L}_{\rho})}{\sqrt{p}}\right)=O\left(p^{3/2}\right).
\end{align*}  
Finally, applying the estimate for $S'$ in $S$, we complete the proof.
% as both $C(\mathcal{F}),C(\mathcal{L}_{\rho})=O(1).$
% And since, we obtain $\sum_{c=1}^{p-1}\rho(c)\phi(a^2c)\phi(c)=O(p^{3/2})$. This completes the proof of our lemma.
\end{proof}
%%%%%%%%%%%%%%%%%%%%%
\begin{proof}[Proof of Theorem \ref{lem-T}]
We observe that
\begin{align}\label{break}
&\sum_{b=1}^{p-1}\sum_{c=1}^{p-1}\sum_{d=1}^{p-1}\left(\frac{b^2-c^2}{p}\right)\left(\frac{b^2-1}{p}\right)\left(\frac{d^2-c^2}{p}\right)\left(\frac{d^2-1}{p}\right)\notag\\
&=2(p-3)^2+T,
\end{align}
where 
\begin{align}\label{constant-T}
T=\sum_{b=1}^{p-1}\sum_{c=2}^{p-2}\sum_{d=1}^{p-1}\left(\frac{b^2-c^2}{p}\right)\left(\frac{b^2-1}{p}\right)\left(\frac{d^2-c^2}{p}\right)\left(\frac{d^2-1}{p}\right)
\end{align}
is the same as given in \eqref{X2}. Proceeding similarly as shown in the proof of Theorem \ref{lemma-sum-1} we write
\begin{align*}
T&=\sum_{b=1}^{p-1}\sum_{c=2}^{p-2}\sum_{d=1}^{p-1}\rho(b^2-c^2)\rho(b^2-1)\rho(d^2-c^2)\rho(d^2-1)\\
&=\sum_{b=1}^{p-1}\sum_{c=2}^{p-2}\sum_{d=1}^{p-1}\rho(b-c^2)\rho(b-1)\rho(d^2-c^2)\rho(d^2-1)(1+\rho(b))\\
&=T_1+T_2,
\end{align*}
where
\begin{align*}
T_1&=\sum_{b=1}^{p-1}\sum_{c=2}^{p-2}\sum_{d=1}^{p-1}\rho(b-c^2)\rho(b-1)\rho(d^2-c^2)\rho(d^2-1)
\end{align*}
and 
\begin{align*}
T_2&=\sum_{b=1}^{p-1}\sum_{c=2}^{p-2}\sum_{d=1}^{p-1}\rho(b-c^2)\rho(b-1)\rho(b)\rho(d^2-c^2)\rho(d^2-1).
\end{align*}
We find that
\begin{align*}
T_1&=\sum_{c=2}^{p-2}\sum_{d=1}^{p-1}\rho(d^2-c^2)\rho(d^2-1)\sum_{b=1}^{p-1}\rho(b-c^2)\rho(b-1)\\
&=\sum_{c=2}^{p-2}\sum_{d=1}^{p-1}\rho(d^2-c^2)\rho(d^2-1)\sum_{b=1}^{p}\rho(b-c^2)\rho(b-1)\\
&\qquad\qquad\qquad-\sum_{c=2}^{p-2}\sum_{d=1}^{p-1}\rho(d^2-c^2)\rho(d^2-1)\\
&=-2\sum_{c=2}^{p-2}\sum_{d=1}^{p-1}\rho(d^2-c^2)\rho(d^2-1)\\
&=O(p^{3/2}).
\end{align*}
 Now we use the same argument on $T_2$ for $d$, which gives
\begin{align*}
T_2&=\sum_{b=1}^{p-1}\sum_{c=2}^{p-2}\sum_{d=1}^{p-1}\rho(b-c^2)\rho(b-1)\rho(b)\rho(d-c^2)\rho(d-1)(1+\rho(d))\\
&=T_{2}^{\prime}+T_{2}^{\prime\prime},
\end{align*}
where 
\begin{align*}
T_{2}^{\prime}&=\sum_{b=1}^{p-1}\sum_{c=2}^{p-2}\sum_{d=1}^{p-1}\rho(b-c^2)\rho(b-1)\rho(b)\rho(d-c^2)\rho(d-1)
\end{align*}
and 
\begin{align*}
T_{2}^{\prime\prime}&=\sum_{b=1}^{p-1}\sum_{c=2}^{p-2}\sum_{d=1}^{p-1}\rho(b-c^2)\rho(b-1)\rho(b)\rho(d-c^2)\rho(d-1)\rho(d).
\end{align*}
We first find that 
\begin{align*}
T_{2}^{\prime}&=\sum_{b=1}^{p-1}\sum_{c=2}^{p-2}\rho(b-c^2)\rho(b-1)\rho(b)\sum_{d=1}^{p-1}\rho(d-c^2)\rho(d-1)\\
&=\sum_{b=1}^{p-1}\sum_{c=2}^{p-2}\rho(b-c^2)\rho(b-1)\rho(b)\sum_{d=1}^{p}\rho(d-c^2)\rho(d-1)\\
&\qquad\qquad\qquad-\sum_{b=1}^{p-1}\sum_{c=2}^{p-2}\rho(b-c^2)\rho(b-1)\rho(b)\\
&=-2\sum_{b=1}^{p-1}\sum_{c=2}^{p-2}\rho(b-c^2)\rho(b-1)\rho(b)\\
&=O(p^{3/2}).
\end{align*}
Now
\begin{align*}
T_{2}^{\prime\prime}&=\sum_{b=1}^{p-1}\sum_{c=1}^{p-1}\sum_{d=1}^{p-1}\rho(b-c^2)\rho(b-1)\rho(b)\rho(d-c^2)\rho(d-1)\rho(d)-2\sum_{b=2}^{p-1}\sum_{d=2}^{p-1}\rho(b)\rho(d)\\
&=\sum_{b=1}^{p-1}\sum_{c=1}^{p-1}\sum_{d=1}^{p-1}\rho(b-c)\rho(b-1)\rho(b)\rho(d-c)\rho(d-1)\rho(d)(1+\rho(c))+O(1)\\
&=\sum_{b=1}^{p-1}\sum_{c=1}^{p-1}\sum_{d=1}^{p-1}\rho(b-c)\rho(b-1)\rho(b)\rho(d-c)\rho(d-1)\rho(d)\rho(c)\\
&\qquad\qquad+\sum_{b=1}^{p-1}\sum_{d=1}^{p-1}\rho(b-1)\rho(b)\rho(d-1)\rho(d)\sum_{c=1}^{p-1}\rho(c-b)\rho(c-d)+O(1)\\
&=\sum_{b=1}^{p-1}\sum_{c=1}^{p-1}\sum_{d=1}^{p-1}\rho(b-c)\rho(b-1)\rho(b)\rho(d-c)\rho(d-1)\rho(d)\rho(c)\\
&\qquad\qquad+(p-1)(p-2)+O(p).
\end{align*}
Hence we have
\begin{align*}
T&=T_1+T_2\\
&=O(p^{3/2})+T_{2}^{\prime}+T_{2}^{\prime\prime}\\
&=T^{\prime}+(p-1)(p-2)+O(p^{3/2}),
\end{align*}
where
\begin{align*}
T^{\prime}&=\sum_{b=1}^{p-1}\sum_{c=1}^{p-1}\sum_{d=1}^{p-1}\rho(b-c)\rho(b-1)\rho(b)\rho(d-c)\rho(d-1)\rho(d)\rho(c)\\
&=\sum_{c=1}^{p-1}\rho(c)\left(\sum_{b=1}^{p-1}\rho(b-c)\rho(b-1)\rho(b)\right)\left(\sum_{d=1}^{p-1}\rho(d-c)\rho(d-1)\rho(d)\right)\\
&=\sum_{c=1}^{p-1}\rho(c)\phi^2(c).
\end{align*}
Here $\phi(t)=\sum_{b=1}^{p-1}\rho(b)\rho(b-1)\rho(b-t)$. Note that $\phi(t)$ and $\rho(c)$ are the trace functions of the cohomology sheaf $\mathcal{F}$ and $\mathcal{L}_{\rho}$ as defined in the proof of Theorem \ref{lemma-sum-1}. We normalise the trace function of $\mathcal{F}$ dividing by $\sqrt{p}$. Note that $\mathcal{F}\otimes\mathcal{L}_{\rho}$ is geometrically irreducible, as tensoring with one dimensional sheaf preserves geometric irreducibility. Also $\mathcal{F}\otimes\mathcal{L}_{\rho}$ is not geometrically isomorphic to the dual of $\mathcal{F}$, which can be checked using the local monodromy representation at $0$. For the Legendre family, it is unipotent as the reduction is semi-stable, but for tensoring with Kummer sheaf $\mathcal{L}_{\rho}$, it is non-unipotent. Hence from \cite[(5.3)]{FKMS} we obtain
\begin{align*}
\frac{1}{p}\sum_{c=1}^{p-1}\rho(c)\frac{\phi^2(c)}{p}&=O\left(\frac{(C(\mathcal{F}))^4(C(\mathcal{L}_{\rho})^2}{\sqrt{p}}\right),
\end{align*}
where $C(\mathcal{F})=C(\mathcal{L}_{\rho})=O(1)$, which finally implies
\begin{align*}
\sum_{c=1}^{p-1}\rho(c)\phi^2(c)=O(p^{3/2}).
\end{align*}
 Replacing the estimate for $T'$ in $T$ we have
\begin{align}\label{bdd-T}
T=p^2+O(p^{3/2}).
\end{align}
Combining \eqref{break} and \eqref{bdd-T} we complete the proof.
\end{proof}
%%%%%%%%%%%%%%%%%%%%%%%%%%%%%%%%%%%%%%%%%%%%%%
\begin{proof}[Proof of Theorem \ref{sum-2}]
We consider
\begin{align*}
R&=\sum_{b=1}^{p-1}\sum_{c=1}^{p-1}\rho(b^2-a^2c^2)\rho(b^2-1)\rho(c^2-1)\\
&=\sum_{b=1}^{p-1}\sum_{c=1}^{p-1}\rho(b-a^2c^2)\rho(b-1)\rho(c^2-1)(1+\rho(b))\\
&=R_1+R_2,
\end{align*}
where 
\begin{align*}
R_1&=\sum_{b=1}^{p-1}\sum_{c=1}^{p-1}\rho(b-a^2c^2)\rho(b-1)\rho(c^2-1)
\end{align*}
and 
\begin{align*}
R_2&=\sum_{b=1}^{p-1}\sum_{c=1}^{p-1}\rho(b-a^2c^2)\rho(b-1)\rho(b)\rho(c^2-1).
\end{align*}
We write
\begin{align*}
R_1&=\sum_{c=1}^{p-1}\rho(c^2-1)\sum_{b=1}^{p-1}\rho(b-a^2c^2)\rho(b-1)\\
&=\sum_{c=1}^{p-1}\rho(c^2-1)\sum_{b=1}^{p}\rho(b-a^2c^2)\rho(b-1)-\sum_{c=1}^{p-1}\rho(c^2-1)\\
&=\sum_{c=1}^{p-1}\rho(c^2-1)\sum_{b=1}^{p}\rho(b-a^2c^2)\rho(b-1)+O(p).
\end{align*}
Then inner sum is $-1$ if $ac\neq\pm 1$ and $p-1$ if $ac=\pm 1$, hence we have
\begin{align*}
R_1=2(p-1)\rho(a^{-2}-1)-\sum_{c=1,c\neq \pm a^{-1}}^{p-1}\rho(c^2-1)+O(p),
\end{align*}
which implies $R_1=O(p).$ So we get
\begin{align*}
R=R_2+O(p).
\end{align*}
Repeating the same argument with $c$ on $R_2$ we obtain
\begin{align*}
R=R'+O(p),
\end{align*}
where
\begin{align*}
R'&=\sum_{c=1}^{p-1}\rho(c-1)\rho(c)\sum_{b=1}^{p-1}\rho(b-a^2c)\rho(b-1)\rho(b)\\
&=\sum_{c=1}^{p-1}\rho(c^2-c)\phi(a^2c),
\end{align*}
where $\phi(t)=\sum_{b=1}^{p-1}\rho(b-t)\rho(b-1)\rho(b).$
 Here we are working with the trace function $\phi(a^2c)$ of $\mathcal{G}$, which is defined in the proof of Theorem \ref{lemma-sum-1} and the trace function $\rho(c^2-c)$ of the pull back of the Kummer sheaf $\mathcal{L}_{\rho}$ by the map $c\rightarrow c^2-c$. Let $\mathcal{K}=\beta^*\mathcal{L}_{\rho}$, where the map $\beta$ is multiplicative translate by $c-1$. We divide the trace function of $\mathcal{G}$ by $\sqrt{p}$. Note that $\mathcal{G}$ is a geometrically irreducible sheaf of rank 2. So the geometrically irreducible, rank 1 sheaf $\mathcal{K}$ is not geometrically isomorphic to the dual of $\mathcal{G}$, which is self dual. Hence form \cite[(5.3)]{FKMS} we have
 \begin{align*}
 \frac{1}{p}\sum_{c=1}^{p-1}\rho(c^2-c)\frac{\phi(a^2c)}{\sqrt{p}}&=O\left(\frac{(C(\mathcal{G}))^2(C(\mathcal{K}))^2}{p^{1/2}}\right)
\end{align*}
which implies 
\begin{align*}
\sum_{c=1}^{p-1}\rho(c^2-c)\phi(a^2c)=O(p),
\end{align*}
as both conductors are of $O(1)$.
This completes the proof.
\end{proof}
%%%%%%%%%%%%%%%%%%%%%%%%%%%%%%%%%%%%%%
\section{Proof of Theorem \ref{MT-1} and Theorem \ref{MT-2}}
In this section we prove our main results Theorem \ref{MT-1} and Theorem \ref{MT-2}. Throughout this section, $C$ stands for the constant given by \eqref{constant-c}. We first recall three lemmas from \cite{zhang} which will be used to prove our main results.
\begin{lemma}\cite[Lemma 2]{zhang}\label{G3}
	For any odd prime $p$, we have the asymptotic formula
	\begin{equation}
	{\sum}'_{\chi(-1)=1}|L(1,\chi)|=\frac{1}{2}\cdot C\cdot p+O(p^{1/2}\cdot\ln p),
	\end{equation}
	where $C$ is given by \eqref{constant-c} and  $ {\sum}'_{\chi(-1)=1}$ denotes the summation over all non-principal even characters $mod ~p$.
\end{lemma}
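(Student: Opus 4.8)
The plan is to expand $|L(1,\chi)|$ into a double Dirichlet series by means of the Euler product and the binomial theorem, and then to sum over the even characters using orthogonality. Since $p$ is prime, every non-principal $\chi\bmod p$ is primitive, so $L(1,\chi)=\prod_{q\ne p}\bigl(1-\chi(q)/q\bigr)^{-1}$. Writing $|L(1,\chi)|=\bigl(L(1,\chi)L(1,\bar\chi)\bigr)^{1/2}=\prod_{q\ne p}\bigl|1-\chi(q)/q\bigr|^{-1}$ and applying $(1-x)^{-1/2}=\sum_{j\ge0}\frac{\binom{2j}{j}}{4^{j}}x^{j}$ to each Euler factor, I would obtain
\[
|L(1,\chi)|=\sum_{\substack{m,n\ge1\\(mn,p)=1}}\frac{g(m)g(n)}{mn}\,\chi(m)\bar\chi(n),
\]
where $g$ is the multiplicative function determined by $g(q^{j})=\binom{2j}{j}/4^{j}$ (so $0<g(m)\le1$ for all $m$).

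Next I would sum over the non-principal even characters. By orthogonality, for $(mn,p)=1$,
\[
{\sum}'_{\chi(-1)=1}\chi(m)\bar\chi(n)=\frac{p-1}{2}\bigl(\mathbf 1_{m\equiv n}+\mathbf 1_{m\equiv -n}\bigr)-1,
\]
the congruences being modulo $p$ and the $-1$ removing the principal character. The diagonal contribution $m=n$ is the main term:
\[
\frac{p-1}{2}\sum_{(m,p)=1}\frac{g(m)^{2}}{m^{2}}=\frac{p-1}{2}\prod_{q\ne p}\Bigl(1+\frac{\binom21^{2}}{4^{2}q^{2}}+\frac{\binom42^{2}}{4^{4}q^{4}}+\cdots\Bigr)=\frac{p-1}{2}\bigl(C+O(p^{-2})\bigr)=\tfrac12 Cp+O(1),
\]
the missing $q=p$ Euler factor of $C$ differing from $1$ by $O(p^{-2})$. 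This already yields the claimed leading term, and it explains the exact shape of the constant $C$: it is precisely $\sum_{m}g(m)^{2}/m^{2}$, the mean value of $|L(1,\chi)|$ predicted by modelling the $\chi(q)$ as independent uniform points on the unit circle.

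It then remains to bound the remaining pieces. The off-diagonal congruences $m\equiv\pm n\pmod p$ with $m\ne n$ force $|m-n|\ge p$ or $m+n\ge p$, so at least one variable exceeds $p/2$; using $g(m)\le1$ and summing $1/(mn)$ over such pairs keeps their total small, and the subtracted principal-character sum, once the series has been truncated, is $O(\ln p)$. All of this is absorbed into the error $O(p^{1/2}\ln p)$.

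The main obstacle, and the step demanding genuine care, is that the half-power series $\sum_{m}g(m)\chi(m)/m=L(1,\chi)^{1/2}$ converges only conditionally at $s=1$ (indeed $\sum_{m}g(m)/m$ diverges like $\zeta(1)^{1/2}$), so the interchange of ${\sum}'_{\chi}$ with $\sum_{m,n}$ is \emph{not} justified by absolute convergence. The remedy is to truncate the Dirichlet series at height $M=p^{A}$ for a suitable constant $A$ and to control the tail via the P\'olya--Vinogradov inequality $\sum_{n\le x}\chi(n)\ll\sqrt p\,\ln p$ for non-principal $\chi\bmod p$, combined with partial summation; the resulting truncation error, summed over the $\approx p/2$ characters, is what ultimately produces the stated $O(p^{1/2}\ln p)$. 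Once the sums are finite the interchange is legitimate and the orthogonality computation above applies verbatim.
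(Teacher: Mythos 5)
The paper offers no proof of this lemma at all: it is imported verbatim from \cite[Lemma 2]{zhang}, so there is nothing internal to compare against. Your strategy is in fact the standard one behind Zhang's result, and most of it is sound: the square-root Euler expansion $|L(1,\chi)|=\sum_{m,n}g(m)g(n)\chi(m)\overline{\chi}(n)/(mn)$ with $g(q^{j})=\binom{2j}{j}/4^{j}$, orthogonality over the even non-principal characters, and the diagonal $\frac{p-1}{2}\sum_{(m,p)=1}g(m)^{2}/m^{2}$ producing exactly the Euler product $C$ of \eqref{constant-c}. Your orthogonality identity, the diagonal main term, the off-diagonal congruences $m\equiv\pm n\pmod p$ with $m\ne n$, and the subtracted principal-character piece are all handled correctly.

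The genuine gap is in the one step you yourself flag as delicate: controlling the truncation tail $\sum_{n>M}g(n)\chi(n)/n$. The P\'olya--Vinogradov inequality bounds $\sum_{n\le x}\chi(n)$, whereas partial summation for the tail requires a nontrivial bound on $\sum_{n\le x}g(n)\chi(n)$, and there is no elementary way to transfer the bound: writing $g=\mathbf 1*h$ with $h=g*\mu$ gives $|h(q)|=1/2$ at every prime, so $\sum_{d\le x}|h(d)|\asymp x/\sqrt{\ln x}$ and the convolution estimate is worse than trivial; the termwise bound fails outright since $\sum_{n>M}g(n)/n=\infty$ (the series $\sum_n g(n)n^{-s}$ is $\zeta(s)^{1/2}$, divergent at $s=1$). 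As written, the tail estimate --- and hence the stated error term --- is not established. The standard repair, which is how arguments of this type (including Zhang's) actually proceed, is to control the tails on average over the family: expand $\sum_{\chi\bmod p}\bigl|\sum_{n>M}g(n)\chi(n)/n\bigr|^{2}$ by orthogonality, obtaining a diagonal contribution $\ll p\sum_{n>M}g(n)^{2}/n^{2}\ll p/M$ plus a harmless off-diagonal contribution once $M\ge p$, and then insert this into the main computation via Cauchy--Schwarz against the truncated sums $\sum_{n\le M}g(n)\chi(n)/n$, whose mean square over $\chi$ is $O(p)$. Taking $M$ a suitable power of $p$ then yields the claimed $\frac12Cp+O(p^{1/2}\ln p)$. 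So your proof is repairable, but the pointwise P\'olya--Vinogradov step must be replaced by this family-average (mean-square) argument.
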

\begin{lemma}\cite[Lemma 1]{zhang}\label{gen6}
	For any odd prime $p$, we have 
	\begin{equation*}
	\sum_{a=1}^{p-1}\left|\displaystyle{\sum_{\chi\neq\chi_0}}\chi(a)|L(1,\chi)|\right|=O(p \cdot\ln p).
	\end{equation*}
\end{lemma}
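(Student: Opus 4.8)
The plan is to reduce the claim to a near-explicit evaluation of the inner sum $f(a):=\sum_{\chi\neq\chi_0}\chi(a)|L(1,\chi)|$. First I would record two structural facts. Since $|L(1,\chi)|=|L(1,\bar\chi)|$, the function $f$ is real-valued, and by orthogonality $\sum_{a=1}^{p-1}f(a)=\sum_{\chi\neq\chi_0}|L(1,\chi)|\sum_{a}\chi(a)=0$, so $f$ has mean zero in $a$. Next, using the P\'olya--Vinogradov inequality together with partial summation, I would replace each $L(1,\chi)$ by the truncated Dirichlet polynomial $\sum_{n\le N}\chi(n)/n$ with $N=p^{2}$; the triangle inequality $\big||A|-|B|\big|\le|A-B|$ gives $|L(1,\chi)|=\big|\sum_{n\le N}\chi(n)/n\big|+O\big(p^{1/2}N^{-1}\log p\big)$ uniformly in $\chi$, and summing this error trivially over the $p-1$ characters and over $a$ contributes only $O(p^{5/2}N^{-1}\log p)=O(p^{1/2}\log p)$ to $\sum_a|f(a)|$. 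Thus the problem is reduced to the finite model in which $L(1,\chi)$ is a genuine character polynomial.

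To see where the bound $O(p\log p)$ should come from, it is instructive to run the computation with $L(1,\chi)$ in place of $|L(1,\chi)|$. There the identity $\sum_{\chi\neq\chi_0}\chi(an)=(p-1)\mathbf{1}[an\equiv 1\ (p)]-1$ isolates the diagonal $n\equiv \bar a\pmod p$, where $\bar a$ is the least positive inverse of $a$, and the smallest such $n$, namely $n=\bar a$, produces a main term of size $\asymp p/\bar a$. Summing, $\sum_{a=1}^{p-1}p/\bar a=p\sum_{\bar a=1}^{p-1}1/\bar a\asymp p\log p$, which is exactly the target; equivalently, the heuristic $f(a)\approx p/\bar a$ is consistent with the second moment $\sum_a f(a)^2=(p-1)\sum_{\chi\neq\chi_0}|L(1,\chi)|^{2}\asymp p^{2}$. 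This makes clear that the estimate rests on the \emph{concentration} of $f$ near those $a$ whose inverse $\bar a$ is small, not on any flat bound.

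The main obstacle is the modulus: since $|L(1,\chi)|=\sqrt{L(1,\chi)L(1,\bar\chi)}$ is not a character polynomial, one cannot apply orthogonality directly, and a plain Cauchy--Schwarz $\sum_a|f(a)|\le (p-1)^{1/2}\|f\|_2\ll p^{3/2}$ is too lossy precisely because it discards the concentration identified above. To remove the modulus while retaining that concentration I would split the characters by parity and invoke the classical closed forms: for odd $\chi$ one has $L(1,\chi)=\frac{\pi i\,\tau(\chi)}{p^{2}}\sum_{b=1}^{p-1}b\,\bar\chi(b)$, whence $|L(1,\chi)|=\frac{\pi}{p^{3/2}}\big|\sum_{b=1}^{p-1}b\,\bar\chi(b)\big|$ since $|\tau(\chi)|=\sqrt p$, while for even $\chi$ there is the analogous expression with $b$ replaced by the real weight $-\log|2\sin(\pi b/p)|$; in both cases $|L(1,\chi)|$ becomes the modulus of an explicit normalized linear form in $\bar\chi$ of size $O(\log p)$. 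Writing $\sum_a|f(a)|=\sum_{\chi\neq\chi_0}|L(1,\chi)|\,\widehat{\varepsilon}(\chi)$ with $\varepsilon_a=\operatorname{sgn}f(a)$ and $\widehat{\varepsilon}(\chi)=\sum_a\varepsilon_a\chi(a)$, I would then combine the first--moment mean values of $|L(1,\chi)|$ over the even and over the odd characters (the even case being Lemma \ref{G3}) with the explicit diagonal contribution to control the sum. The delicate point, and the step I expect to be hardest, is to show that the sign pattern $\varepsilon_a$ couples to $|L(1,\chi)|$ essentially only through the diagonal $n\equiv\bar a$, so that the off--diagonal terms cancel down to the admissible size $O(p\log p)$ rather than to the Cauchy--Schwarz size $O(p^{3/2})$.
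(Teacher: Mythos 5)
Your write-up is a plan rather than a proof. The reductions you do carry out are correct (realness and mean zero of $f$, truncation of $L(1,\chi)$ at $N=p^{2}$ with an acceptable cumulative error, the second-moment identity $\sum_{a}f(a)^{2}=(p-1)\sum_{\chi\neq\chi_0}|L(1,\chi)|^{2}\asymp p^{2}$, and the observation that plain Cauchy--Schwarz only yields $O(p^{3/2})$), but the one step that actually proves the lemma --- making orthogonality in $\chi$ act \emph{through} the absolute value $|L(1,\chi)|$ --- is exactly the step you defer as ``the hardest''. Passing to the closed forms $|L(1,\chi)|=\frac{\pi}{p^{3/2}}\bigl|\sum_{b}b\,\bar\chi(b)\bigr|$ for odd $\chi$ and the log-sine analogue for even $\chi$ does not advance matters: you still face the modulus of a linear form in $\bar\chi$, and the dual formulation $\sum_{a}|f(a)|=\sum_{\chi\neq\chi_0}|L(1,\chi)|\,\widehat{\varepsilon}(\chi)$ merely restates the problem, since nothing is established about how the sign vector $\varepsilon$ correlates with $|L(1,\chi)|$. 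As written, the argument does not reach $O(p\ln p)$.

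For comparison: the paper does not reprove this lemma but quotes it from Zhang \cite[Lemma 1]{zhang}, and the mechanism there is precisely the ingredient your plan is missing. One expands $|L(1,\chi)|=\prod_{q}|1-\chi(q)q^{-1}|^{-1}$ via $(1-x)^{-1/2}=\sum_{k\ge 0}\binom{2k}{k}(x/4)^{k}$, so that $|L(1,\chi)|$ itself becomes a genuine Dirichlet series $\sum_{n}b_{\chi}(n)n^{-1}$ with $b_{\chi}(n)=\sum_{uv=n}c(u)c(v)\chi(u)\bar\chi(v)$, where $c$ is the multiplicative function with $c(q^{k})=\binom{2k}{k}4^{-k}$; this expansion is also the source of the constant in \eqref{constant-c}. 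After truncating at $u,v\le N$, the identity $\sum_{\chi\neq\chi_0}\chi(m)=(p-1)\mathbf{1}[m\equiv 1\ (p)]-1$ gives
$\sum_{\chi\neq\chi_0}\chi(a)|L(1,\chi)|=(p-1)\sum_{au\equiv v\ (p)}c(u)c(v)/(uv)+O(1)$ plus the truncation error, and summing absolute values over $a$ costs only $(p-1)\bigl(\sum_{u\le N}c(u)/u\bigr)^{2}$ because each pair $(u,v)$ determines a unique residue $a$. Since $\sum_{n}c(n)n^{-s}$ behaves like $\zeta(s)^{1/2}$, one has $\sum_{u\le N}c(u)/u\ll(\log N)^{1/2}$, giving the bound $O(p\log p)$. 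This is exactly the ``concentration on the diagonal $n\equiv\bar a$'' that you correctly identified heuristically but could not extract through the modulus; to complete your proof you would need to supply this (or an equivalent) device.
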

\begin{lemma}\cite[Lemma 4]{zhang}\label{gen1}
	Let $p$ be an odd prime, $\chi$ be any non-principal even character $mod ~p$. 
	Then for any integer $n$ with $\gcd(n,p)=1$, we have the identity
	\begin{equation*}
	|G(n,\chi;p)|^2=2p+\left(\frac{n}{p}\right)G(1;p)\sum_{a=1}^{p-1}\chi(a)\left(\frac{a^2-1}{p}\right),
	\end{equation*}
	where $\left(\frac{\bullet}{p}\right)$ is the Legendre symbol.
\end{lemma}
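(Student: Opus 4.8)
The plan is to prove the identity by expanding the squared modulus into a double character sum and then decoupling the two summation variables through a multiplicative substitution, thereby reducing the whole expression to a one-dimensional quadratic Gauss sum that can be evaluated in closed form. First I would write
\begin{align*}
|G(n,\chi;p)|^2=\sum_{a=1}^{p-1}\sum_{b=1}^{p-1}\chi(a)\overline{\chi(b)}\,e\left(\frac{n(a^2-b^2)}{p}\right),
\end{align*}
where the terms with $a=p$ or $b=p$ drop out since $\chi(p)=0$. Because $b$ is invertible modulo $p$ we have $\overline{\chi(b)}=\chi(b^{-1})$, so the substitution $a=bt$ (as $a$ runs over the nonzero residues, so does $t$ for each fixed $b$) gives $\chi(a)\overline{\chi(b)}=\chi(t)$ and $a^2-b^2=b^2(t^2-1)$. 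After interchanging the order of summation this becomes
\begin{align*}
|G(n,\chi;p)|^2=\sum_{t=1}^{p-1}\chi(t)\sum_{b=1}^{p-1}e\left(\frac{n(t^2-1)b^2}{p}\right).
\end{align*}

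Next I would evaluate the inner sum over $b$ by separating the cases $t\equiv\pm 1$ and $t\not\equiv\pm 1$. When $t\equiv\pm 1$ the exponent vanishes and the inner sum equals $p-1$; here the hypothesis that $\chi$ is even enters, since $\chi(1)=\chi(-1)=1$ forces these two values of $t$ to contribute $2(p-1)$ together. When $t\not\equiv\pm 1$, setting $m=n(t^2-1)$ with $\gcd(m,p)=1$ and completing the sum to the range $b=0,\dots,p-1$, the classical evaluation $\sum_{b=0}^{p-1}e(mb^2/p)=\left(\frac{m}{p}\right)G(1;p)$ yields inner sum $=\left(\frac{n}{p}\right)\left(\frac{t^2-1}{p}\right)G(1;p)-1$, where the $-1$ accounts for the omitted term $b=0$.

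Finally I would assemble the pieces, splitting the result into a constant part and a main part. For the constant part, orthogonality $\sum_{t=1}^{p-1}\chi(t)=0$ (valid since $\chi$ is non-principal) gives $-\sum_{t\neq\pm 1}\chi(t)=\chi(1)+\chi(-1)=2$, which combines with the $2(p-1)$ coming from $t=\pm 1$ to produce exactly $2p$. For the main part, I would observe that $\left(\frac{t^2-1}{p}\right)=0$ precisely when $t=\pm 1$, so the surviving sum $\left(\frac{n}{p}\right)G(1;p)\sum_{t\neq\pm 1}\chi(t)\left(\frac{t^2-1}{p}\right)$ may be extended back to the full range $t=1,\dots,p-1$ without changing its value, which recovers the claimed formula. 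The main obstacle is essentially careful bookkeeping: one must track how the excluded term $b=0$ and the special values $t=\pm 1$ combine into the final constant $2p$, and it is exactly at these special points that the evenness of $\chi$ is indispensable. The only non-elementary input is the standard closed form for the quadratic Gauss sum $\sum_{b}e(mb^2/p)$ in terms of the Legendre symbol and $G(1;p)$.
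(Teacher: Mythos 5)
Your proof is correct and complete: the substitution $a=bt$, the case split at $t\equiv\pm 1$, the evaluation $\sum_{b=0}^{p-1}e(mb^2/p)=\left(\frac{m}{p}\right)G(1;p)$, and the bookkeeping that assembles $2(p-1)+2=2p$ all check out, and you correctly identify where the evenness and non-principality of $\chi$ are used. The paper itself does not prove this lemma but cites it from Zhang, and your argument is the standard one used there, so there is nothing further to compare.
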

The next remarkable result is due to Gauss.
\begin{lemma}\cite[Theorem 9.16]{gauss}\label{gen7}
	For any integer $q\geq1$, we have
	\begin{equation*}
	G(1;q)=\frac{1}{2}\sqrt{q}(1+i)(1+e^{\frac{-\pi iq}{2}})=\begin{cases}\sqrt{q}, \qquad &if \qquad q\equiv1\pmod 4;\\
	0, \qquad &if \qquad q\equiv2\pmod 4;\\
	i\sqrt{q}, \qquad &if \qquad q\equiv3\pmod 4;\\
	(1+i)\sqrt{q},  \qquad &if \qquad q\equiv0\pmod 4.                                                           
	\end{cases}
	\end{equation*}
\end{lemma}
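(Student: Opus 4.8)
The modulus of $G(1;q)$ is the easy part: squaring and collapsing the inner sum over a complete residue system gives $|G(1;q)|^2=q$ for odd $q$, while the vanishing for $q\equiv 2\pmod 4$ is immediate upon pairing $a$ with $a+q/2$. The entire difficulty lies in the \emph{phase}. Accordingly, the plan is to prove the Landsberg--Schaar reciprocity relation
\[
\frac{1}{\sqrt q}\,G(1;q)=\frac{1}{\sqrt q}\sum_{a=0}^{q-1}e\!\left(\frac{a^2}{q}\right)=\frac{e^{\pi i/4}}{\sqrt 2}\left(1+e^{-\pi i q/2}\right),
\]
and then to read off the stated closed form. Since $e^{\pi i/4}=(1+i)/\sqrt 2$, we have $e^{\pi i/4}/\sqrt 2=(1+i)/2$, so the displayed identity is exactly $G(1;q)=\tfrac12\sqrt q\,(1+i)\bigl(1+e^{-\pi i q/2}\bigr)$. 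Evaluating the factor $1+e^{-\pi i q/2}$ according to $q \bmod 4$ (it equals $2,\,1-i,\,0,\,1+i$ for $q\equiv 0,1,2,3$ respectively) and multiplying by $\tfrac12\sqrt q(1+i)$ then produces the four cases $(1+i)\sqrt q,\ \sqrt q,\ 0,\ i\sqrt q$ after a one-line simplification.

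To establish the reciprocity I would use Poisson summation with a regularisation, since the summand $e(x^2/q)$ does not decay. For $\varepsilon>0$ apply the Poisson summation formula to the Schwartz function $f_\varepsilon(x)=e(x^2/q)\,e^{-\pi\varepsilon x^2}$, whose Fourier transform is computed by completing the square in $\int_{\mathbb R}e^{\pi i\tau x^2-2\pi i\xi x}\,dx$ with $\tau=2/q+i\varepsilon$, giving $\widehat{f_\varepsilon}(\xi)=(-i\tau)^{-1/2}e^{-\pi i\xi^2/\tau}$. Writing $\sum_{n\in\mathbb Z}f_\varepsilon(n)=\sum_{m\in\mathbb Z}\widehat{f_\varepsilon}(m)$ and grouping the left lattice sum by residues modulo $q$, the Gauss sum $G(1;q)$ factors out multiplied by a common singular factor of order $\varepsilon^{-1/2}$; on the right, the regularised dual sum $\sum_m(-i\tau)^{-1/2}e^{-\pi i m^2/\tau}$ converges because $\mathrm{Re}(1/\tau)$ contributes decay, and as $\varepsilon\to 0^{+}$ its singular part organises into the short dual sum $\sum_{n=0}^{1}e^{-\pi i q n^2/2}=1+e^{-\pi i q/2}$, with the constant $e^{\pi i/4}/\sqrt 2$ emerging from the Gaussian integral. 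Matching the two singular parts yields the reciprocity relation.

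The \textbf{main obstacle} is precisely the phase $e^{\pi i/4}$, which is where Gauss's notorious sign ambiguity resides. It issues from the complex Gaussian integral $\int_{\mathbb R}e^{\pi i\tau x^2}\,dx=(-i\tau)^{-1/2}$, taken on the principal branch valid for $\mathrm{Im}\,\tau>0$, in the limit $\tau=2/q+i\varepsilon\to 2/q$: here $-i\tau\to-2i/q$ has argument $-\pi/2$, so $(-i\tau)^{-1/2}\to\sqrt{q/2}\,e^{\pi i/4}$. The delicate step is to keep the branch of the square root consistent throughout this limit — the wrong branch replaces $e^{\pi i/4}$ by its conjugate and corrupts the sign — and to justify exchanging the $\varepsilon\to0$ limit with the lattice summation, since the unregularised series is only conditionally convergent. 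An equivalent and perhaps cleaner packaging replaces the convergence factor by the Jacobi theta function $\theta(t)=\sum_{n\in\mathbb Z}e^{-\pi n^2 t}$ together with its functional equation $\theta(1/t)=\sqrt t\,\theta(t)$, analytically continued from $t>0$ toward the boundary value $t\to-2i/q$; the choice of $\sqrt t$ under this continuation is the same branch question in different clothing. A purely algebraic alternative is Schur's observation that $G(1;q)/\sqrt q$ is the trace of the finite Fourier transform matrix, whose fourth power is the identity, so that the value follows from the multiplicities of its eigenvalues in $\{1,i,-1,-i\}$ — though pinning down those multiplicities is itself the crux there.
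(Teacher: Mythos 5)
The paper does not prove this lemma at all: it is imported verbatim from Apostol (\emph{Introduction to Analytic Number Theory}, Theorem~9.16), where it is obtained as the special case $n=2$ of the reciprocity law $\sum_{r=0}^{m-1}e^{\pi i r^2 n/m}=\sqrt{m/n}\,e^{\pi i/4}\sum_{r=0}^{n-1}e^{-\pi i r^2 m/n}$, which Apostol proves by contour integration (the residue theorem applied to $e^{\pi i z^2 n/m}/(e^{2\pi i z}-1)$ over a parallelogram). Your route — Poisson summation against the regularised Gaussian $e(x^2/q)e^{-\pi\varepsilon x^2}$, equivalently the boundary behaviour of the theta functional equation — is a genuinely different and entirely standard derivation of the same Landsberg--Schaar identity $G(1;q)=\sqrt{q/2}\,e^{\pi i/4}(1+e^{-\pi i q/2})$, and your reduction of that identity to the four stated cases is correct (the values $2,\ 1-i,\ 0,\ 1+i$ of $1+e^{-\pi i q/2}$ and the multiplications by $(1+i)/2$ all check). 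You also correctly locate the crux in the branch of $(-i\tau)^{-1/2}$ as $\tau\to 2/q$, which is exactly where the classical sign ambiguity lives; Apostol's contour proof buys a cleaner handling of that sign (it comes out of a single real Fresnel-type integral) at the cost of a more elaborate contour estimate, while yours buys conceptual transparency and the full reciprocity law in one stroke. What your write-up still owes is the actual execution of the two asserted limit interchanges: grouping $\sum_n f_\varepsilon(n)$ by residues mod $q$ produces $G(1;q)$ times $\sum_k e^{-\pi\varepsilon(qk+a)^2}\sim (q\sqrt\varepsilon)^{-1}$ \emph{uniformly in $a$}, and on the dual side one must isolate the $\varepsilon^{-1/2}$ singular part of $\sum_m(-i\tau)^{-1/2}e^{-\pi i m^2/\tau}$ by grouping $m$ modulo $2$ (the phase $e^{-\pi i m^2 q/2}$ has period $2$ in $m$) and show the remainder stays bounded; these are routine but they are the whole proof, and as written they are asserted rather than proved. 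As a blind outline the proposal is sound and its stated obstacles are the right ones.
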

%%%%%%%%%%%%%%%%%
%%%%%%%%%%%%5
 \begin{proof}[Proof of Theorem \ref{MT-1}]
We first note that if $\chi $ is an odd character modulo $p$, then
\begin{equation}
 G(n,\chi;p)=\sum_{a=1}^{p}\chi(a)e\left(\frac{na^2}{p}\right)=0.\notag
\end{equation}
 Thus, for any $n$ with $\gcd(n,p)=1$ we have 
 \begin{equation}\label{cancel}
   \sum_{\chi\neq\chi_0}|G(n,\chi;p)|^6\cdot|L(1,\chi)|= \sum_{\substack{\chi\neq\chi_0\\ \chi(-1)=1}}|G(n,\chi;p)|^6\cdot|L(1,\chi)|.
 \end{equation}
Using Lemma \ref{gen1} we have
\begin{align}
   &\sum_{\chi\neq\chi_0}|G(n,\chi;p)|^6\cdot|L(1,\chi)|\notag\\
   &=\sum_{\substack{\chi\neq\chi_0\\ \chi(-1)=1}}|G(n,\chi;p)|^6\cdot|L(1,\chi)|\notag\\
   &=\sum_{\substack{\chi\neq\chi_0\\ \chi(-1)=1}}\left[2p+G(1;p)\left(\frac{n}{p}\right)\sum_{a=1}^{p-1}\chi(a)\left(\frac{a^2-1}{p}\right)\right]^3\cdot|L(1,\chi)|\notag\\
   &=\sum_{\substack{\chi\neq\chi_0\\ \chi(-1)=1}}\left[8p^3 +12p^2G(1;p)\left(\frac{n}{p}\right)\sum_{a=1}^{p-1}\chi(a)\left(\frac{a^2-1}{p}\right)\right. \notag\\
   & \qquad \qquad \left.+6p G(1;p)^2\left(\sum_{a=1}^{p-1}\chi(a)\left(\frac{a^2-1}{p}\right)\right)^2\right.\notag\\
   &\qquad\qquad\left.+G(1;p)^3\left(\frac{n}{p}\right)\left(\sum_{a=1}^{p-1}\chi(a)\left(\frac{a^2-1}{p}\right)\right)^3\right]\cdot|L(1,\chi)|.\notag
\end{align}
The above sum can be written as 
\begin{align}\label{6th-order}
&\sum_{\chi\neq\chi_0}|G(n,\chi;p)|^6\cdot|L(1,\chi)|\notag\\&=8p^3\sum_{\substack{\chi\neq\chi_0\\ \chi(-1)=1}}|L(1,\chi)|+12p^2G(1;p)\left(\frac{n}{p}\right)A_1+6p G(1;p)^2A_2+G(1;p)^3\left(\frac{n}{p}\right)A_3,\notag\\
\end{align}
where
\begin{align*}
&A_1=\sum_{\substack{\chi\neq\chi_0\\ \chi(-1)=1}}\sum_{a=1}^{p-1}\chi(a)\left(\frac{a^2-1}{p}\right)\cdot|L(1,\chi)|;\\
&A_2=\sum_{\substack{\chi\neq\chi_0\\ \chi(-1)=1}}\left(\sum_{a=1}^{p-1}\chi(a)\left(\frac{a^2-1}{p}\right)\right)^2\cdot|L(1,\chi)|;\\
&A_3=\sum_{\substack{\chi\neq\chi_0\\ \chi(-1)=1}}\left(\sum_{a=1}^{p-1}\chi(a)\left(\frac{a^2-1}{p}\right)\right)^3\cdot|L(1,\chi)|.
\end{align*}
We will now evaluate $A_1$, $A_2$ and $A_3$. We have the identities
\begin{align}\label{G8}
 \sum_{a=1}^{p-1}&\sum_{b=1}^{p-1}\chi(ab)\left(\frac{a^2-1}{p}\right)\left(\frac{b^2-1}{p}\right)\notag\\ 
 &=\sum_{a=1}^{p-1}\sum_{b=1}^{p-1}\chi(a)\left(\frac{a^2\overline{b}^2-1}{p}\right)\left(\frac{b^2-1}{p}\right) \notag\\
 &=\sum_{a=1}^{p-1}\sum_{b=1}^{p-1}\chi(a)\left(\frac{a^2-b^2}{p}\right)\left(\frac{b^2-1}{p}\right)\notag\\ 
 &=2\left(\frac{-1}{p}\right)(p-3)+\sum_{a=2}^{p-2}\sum_{b=1}^{p-1}\chi(a)\left(\frac{a^2-b^2}{p}\right)\left(\frac{b^2-1}{p}\right) 
 \end{align}
 and
 \begin{align}\label{G11}
 &\sum_{a=2}^{p-2}\left(\frac{a^2-b^2}{p}\right)\sum_{\chi(-1)=-1}\chi(a)\left|L(1,\chi)\right|\notag\\
 &\qquad=\sum_{a=1}^{p-1}\left(\frac{a^2-1}{p}\right)\sum_{\chi(-1)=-1}\chi(a)\left|L(1,\chi)\right|=0.
  \end{align}
Also, we have the Weil estimate
\begin{align}\label{G5}
 \sum_{b=1}^{p-1}\left(\frac{b^2-a^2}{p}\right)\left(\frac{b^2-1}{p}\right)\leq3\sqrt{p},\qquad a^2\not\equiv 1 \pmod p. 
\end{align}
Applying Lemma \ref{gen6} we directly get
\begin{align}\label{A_1-bdd}
A_1=O( p\cdot \ln p).
\end{align}
Next using \eqref{G8}, we rewrite $A_2$ as
\begin{align}\label{A_2-sum}
A_2&=2\left(\frac{-1}{p}\right)(p-3)\sum_{\substack{\chi\neq\chi_0\\ \chi(-1)=1}}|L(1,\chi)|\notag\\
&\qquad \qquad+\sum_{a=2}^{p-2}\sum_{b=1}^{p-1}\left(\frac{a^2-b^2}{p}\right)\left(\frac{b^2-1}{p}\right)\sum_{\substack{\chi\neq\chi_0\\ \chi(-1)=1}}\chi(a)|L(1,\chi)|\notag\\
&=2\left(\frac{-1}{p}\right)(p-3)\sum_{\substack{\chi\neq\chi_0\\ \chi(-1)=1}}|L(1,\chi)|\notag\\
&\qquad \qquad \qquad+O\left(p^{1/2}\sum_{a=2}^{p-2}\left|\sum_{\substack{\chi\neq\chi_0}}\chi(a)|L(1,\chi)|\right|\right).
\end{align}
Hence Lemma \ref{G3} and Lemma \ref{gen6} yield
\begin{align}\label{A_2-bdd}
A_2=C\cdot \left(\frac{-1}{p}\right)p^2+O(p^{3/2}\cdot \ln p).
\end{align}
 Next we rewrite $A_3$ as
\begin{align*}
A_3&=\sum_{\substack{\chi\neq\chi_0 \\ \chi(-1)=1}}\left(\sum_{a=1}^{p-1}\chi(a)\left(\frac{a^2-1}{p}\right)\right)^3\cdot|L(1,\chi)|\notag\\ 
&=2(p-3)\left(\frac{-1}{p}\right)\sum_{c=1}^{p-1}\left(\frac{c^2-1}{p}\right)\sum_{\substack{\chi\neq\chi_0 \\ \chi(-1)=1}}\chi(c)|L(1,\chi)|\notag\\
&+\sum_{a=2}^{p-2}\sum_{b=1}^{p-1}\sum_{c=1}^{p-1}\left(\frac{a^2-b^2}{p}\right)\left(\frac{b^2-1}{p}\right)\left(\frac{c^2-1}{p}\right)\sum_{\substack{\chi\neq\chi_0 \\ \chi(-1)=1}}\chi(ac)|L(1,\chi)|\notag\\
&=\sum_{a=2}^{p-2}\sum_{b=1}^{p-1}\sum_{c=1}^{p-1}\left(\frac{a^2-b^2c^2}{p}\right)\left(\frac{b^2-1}{p}\right)\left(\frac{c^2-1}{p}\right)\sum_{\substack{\chi\neq\chi_0 }}\chi(a)|L(1,\chi)|\notag\\&+2\left(\frac{-1}{p}\right)\sum_{c=1}^{p-1}\sum_{b=1}^{p-1}\left(\frac{c^2-b^2}{p}\right)\left(\frac{b^2-1}{p}\right)\left(\frac{c^2-1}{p}\right)\sum_{\substack{\chi\neq\chi_0 \\ \chi(-1)=1}}|L(1,\chi)|.\notag
\end{align*}
Now using Lemma \ref{gen6} and Theorem \ref{sum-2} we obatin
\begin{align*}
&\left|\sum_{a=2}^{p-2}\sum_{b=1}^{p-1}\sum_{c=1}^{p-1}\left(\frac{a^2-b^2c^2}{p}\right)\left(\frac{b^2-1}{p}\right)\left(\frac{c^2-1}{p}\right)\sum_{\substack{\chi\neq\chi_0 }}\chi(a)|L(1,\chi)|\right|\notag\\
&\leq \sum_{a=2}^{p-2}\left|\sum_{b=1}^{p-1}\sum_{c=1}^{p-1}\left(\frac{a^2-b^2c^2}{p}\right)\left(\frac{b^2-1}{p}\right)\left(\frac{c^2-1}{p}\right)\right|\left|\sum_{\substack{\chi\neq\chi_0 }}\chi(a)|L(1,\chi)|\right|\notag\\
&\ll p^2\cdot \ln p.
\end{align*} 
Using Lemma \ref{G3}, Lemma \ref{gen6}, Theorem \ref{sum-2} and the above estimate we obtain
\begin{align}\label{A_3-bdd}
A_3=O(p^2\cdot\ln p).
\end{align}
Finally, combining \eqref{6th-order}, \eqref{A_1-bdd}, \eqref{A_2-bdd} and \eqref{A_3-bdd} for all the prime $p$ satisfying $p\equiv 1\pmod 4$, and then employing Lemma \ref{G3} we deduce the required asymptotic formula.
\end{proof}
%%%%%%%%%%%%%%%%%%%%%%%%%%%%%%%%%%%%%%%%%%%%%%%%%%%%%%%%
\begin{proof}[Proof of Theorem \ref{MT-2}]
Similarly as \eqref{cancel}, for any $n$ with $\gcd(n,p)=1$ we have 
 \begin{equation}
   \sum_{\chi\neq\chi_0}|G(n,\chi;p)|^8\cdot|L(1,\chi)|= \sum_{\substack{\chi\neq\chi_0\\ \chi(-1)=1}}|G(n,\chi;p)|^8\cdot|L(1,\chi)|.\notag
 \end{equation}
Lemma \ref{gen1} yields
\begin{align*}
   \sum_{\chi\neq\chi_0}&|G(n,\chi;p)|^8\cdot|L(1,\chi)|\\
   &=\sum_{\substack{\chi\neq\chi_0\\ \chi(-1)=1}}|G(n,\chi;p)|^8\cdot|L(1,\chi)|\\
   &=\sum_{\substack{\chi\neq\chi_0\\ \chi(-1)=1}}\left[2p+G(1;p)\left(\frac{n}{p}\right)\sum_{a=1}^{p-1}\chi(a)\left(\frac{a^2-1}{p}\right)\right]^4\cdot|L(1,\chi)|\\
   &=\sum_{\substack{\chi\neq\chi_0\\ \chi(-1)=1}}\left[16p^4 +32p^3G(1;p)\left(\frac{n}{p}\right)\sum_{a=1}^{p-1}\chi(a)\left(\frac{a^2-1}{p}\right)\right. \\
   & \qquad \qquad \left.+24p^2G(1;p)^2\left(\sum_{a=1}^{p-1}\chi(a)\left(\frac{a^2-1}{p}\right)\right)^2\right.\\
   &\qquad\qquad\left.+8pG(1;p)^3\left(\frac{n}{p}\right)\left(\sum_{a=1}^{p-1}\chi(a)\left(\frac{a^2-1}{p}\right)\right)^3\right. \\
   &\left.\qquad\qquad +G(1;p)^4\left(\sum_{a=1}^{p-1}\chi(a)\left(\frac{a^2-1}{p}\right)\right)^4\right]\cdot|L(1,\chi)|.
\end{align*}
We write  
\begin{align}\label{G22}
 &\sum_{\chi\neq\chi_0}|G(n,\chi;p)|^8\cdot|L(1,\chi)|\notag\\
 &\qquad =16p^4\sum_{\substack{\chi\neq\chi_0\\ \chi(-1)=1}}|L(1,\chi)|+32p^3G(1;p)\left(\frac{n}{p}\right)B_1+24p^2G(1;p)^2B_2\notag\\
 &\qquad \qquad \qquad \qquad+8pG(1;p)^3\left(\frac{n}{p}\right)B_3+G(1;p)^4B_4,
\end{align}
where 
\begin{align*}
 B_1&=\sum_{\substack{\chi\neq\chi_0\\ \chi(-1)=1}}\sum_{a=1}^{p-1}\chi(a)\left(\frac{a^2-1}{p}\right)\cdot|L(1,\chi)|;\\
  B_2&=\sum_{\substack{\chi\neq\chi_0 \\ \chi(-1)=1}}\left(\sum_{a=1}^{p-1}\chi(a)\left(\frac{a^2-1}{p}\right)\right)^2\cdot|L(1,\chi)|;\\
 B_3&=\sum_{\substack{\chi\neq\chi_0 \\ \chi(-1)=1}}\left(\sum_{a=1}^{p-1}\chi(a)\left(\frac{a^2-1}{p}\right)\right)^3
 \cdot|L(1,\chi)|;\\
 B_4&=\sum_{\substack{\chi\neq\chi_0\\ \chi(-1)=1}}\left(\sum_{a=1}^{p-1}\chi(a)\left(\frac{a^2-1}{p}\right)\right)^4\cdot|L(1,\chi)|.
\end{align*}
Notice that $A_1=B_1$. Hence from \eqref{A_1-bdd} we readily obtain
\begin{align}\label{B_1-bdd}
B_1=O(p\cdot\ln p).
\end{align}
Similarly, we have $A_2=B_2$. Hence from \eqref{A_2-bdd} we find that 
\begin{align}\label{B_2-bdd}
B_2=\begin{cases}
C\cdot p^2+O(p^{3/2}\cdot\ln p); &\text{if}~p\equiv 1\pmod 4;\\
-C\cdot p^2+O(p^{3/2}\cdot\ln p); &\text{if}~p\equiv 3\pmod 4.
\end{cases}
\end{align} 
Again we have $A_3=B_3$, thus from \eqref{A_3-bdd} we readily obtain
\begin{align}\label{B_3-bdd}
B_3=
O(p^2\cdot\ln p).
\end{align}
%From relation $(25)$ of \cite{yuan} we know that 
%\begin{align*}
%N=0;~~
%\end{align*}
%whenever  $p\equiv 3\pmod 4$. 
%Thus we get
%\begin{align}
%B_3=\begin{cases}
%C\cdot N\cdot p+O(p^2\cdot\ln p); &p\equiv 1\pmod 4;\\
%O(p^2\cdot\ln p); &p\equiv 3\pmod 4.\\
%\end{cases}
%\end{align}
 Finally,  using \eqref{G8} we rewrite $B_4$ as
 \begin{align}\label{B_4-1}
B_4&=\sum_{\substack{\chi\neq\chi_0 \\ \chi(-1)=1}}\left(\sum_{a=1}^{p-1}\chi(a)\left(\frac{a^2-1}{p}\right)\right)^4\cdot|L(1,\chi)|\notag\\ 
&=4(p-3)^2\sum_{\substack{\chi\neq\chi_0 \\ \chi(-1)=1}}|L(1,\chi)|\notag\\
&+4(p-3)\left(\frac{-1}{p}\right)\sum_{a=2}^{p-2}\sum_{b=1}^{p-1}\left(\frac{a^2-b^2}{p}\right)\left(\frac{b^2-1}{p}
\right)\sum_{\substack{\chi\neq\chi_0 \\ \chi(-1)=1}}\chi(a)|L(1,\chi)|\notag\\&+\sum_{a=2}^{p-2}\sum_{b=1}^{p-1}\sum_{c=2}^{p-2}\sum_{d=1}^{p-1}\left(\frac{a^2-b^2}{p}\right)
\left(\frac{b^2-1}{p}\right)\left(\frac{c^2-d^2}{p}\right)\left(\frac{d^2-1}{p}\right)\hspace{-.3cm}\sum_{\substack{\chi\neq\chi_0 \\ \chi(-1)=1}}\hspace{-.3cm}\chi(ac)|L(1,\chi)|\notag\\
&=(4(p-3)^2+2T)\sum_{\substack{\chi\neq\chi_0 \\ \chi(-1)=1}}|L(1,\chi)|\notag\\
&+2(p-3)\left(\frac{-1}{p}\right)\sum_{a=2}^{p-2}\sum_{b=1}^{p-1}\left(\frac{a^2-b^2}{p}\right)\left(\frac{b^2-1}{p}
\right)\sum_{\substack{\chi\neq\chi_0}}\chi(a)|L(1,\chi)|\notag\\&+\sum_{a=2}^{p-2}\sum_{b=1}^{p-1}\sum_{c=2}^{p-2}\sum_{d=1}^{p-1}\left(\frac{a^2-b^2c^2}{p}\right)\left(\frac{b^2-1}{p}\right)\left(\frac{c^2-d^2}{p}\right)\left(\frac{d^2-1}{p}\right)\hspace{-.2cm}\sum_{\substack{\chi\neq\chi_0}}\hspace{-.1cm}\chi(a)|L(1,\chi)|,
\end{align}
where $T$ is the same as \eqref{constant-T}.
From Theorem \ref{lemma-sum-1} it is easy to see that 
\begin{align*}
\sum_{b=1}^{p-1}\sum_{c=2}^{p-2}\sum_{d=1}^{p-1}\left(\frac{a^2-b^2c^2}{p}\right)\left(\frac{b^2-1}{p}\right)\left(\frac{c^2-d^2}{p}\right)\left(\frac{d^2-1}{p}\right)=O(p^{3/2})
\end{align*}
for any $a\in \mathbb{F}_p\setminus\{0,\pm 1\}$.
Using Lemma \ref{gen6} with the above estimate we obtain
\begin{align}\label{B_4-2}
&\left|\sum_{a=2}^{p-2}\sum_{b=1}^{p-1}\sum_{c=2}^{p-2}\sum_{d=1}^{p-1}\left(\frac{a^2-b^2c^2}{p}\right)\left(\frac{b^2-1}{p}\right)\left(\frac{c^2-d^2}{p}\right)\left(\frac{d^2-1}{p}\right)\hspace{-.2cm}\sum_{\substack{\chi\neq\chi_0}}\hspace{-.1cm}\chi(a)|L(1,\chi)|\right|\notag\\
&\ll p^{3/2}\sum_{a=2}^{p-2}\left|\sum_{\substack{\chi\neq\chi_0}}\chi(a)|L(1,\chi)|\right|\notag\\
&\ll p^{5/2}\cdot\ln p.
\end{align} 
Finally using Lemma \ref{G3}, Lemma \ref{gen6}, \eqref{B_4-1} and \eqref{B_4-2} we find that 
\begin{align}\label{B_4-bdd}
B_4=2\cdot C\cdot p^3+C\cdot T\cdot p+O(p^{5/2}\cdot \ln p).
\end{align}
Combining \eqref{G22}, \eqref{B_1-bdd}, \eqref{B_2-bdd}, \eqref{B_3-bdd} and \eqref{B_4-bdd}, and then employing Lemma \ref{G3} and \eqref{bdd-T} we deduce the required asymptotic formula.
\end{proof}
%%%%%%%%%%%%%%%%%%%%%%%%%%%
\section{Proof of Theorem \ref{MT-3} and Theorem \ref{MT-4}}
In this section we prove Theorem \ref{MT-3} and Theorem \ref{MT-4}. The proofs are immediate consequences of Theorem \ref{lem-T} and Theorem \ref{sum-2}.
\begin{proof}[Proof of Theorem \ref{MT-3}]
Recall from \eqref{X1} that 
\begin{align*}
N&=\sum_{a=2}^{p-2}\sum_{c=1}^{p-1}\left(\frac{a^2-c^2}{p}\right)\left(\frac{c^2-1}{p}\right)\left(\frac{a^2-1}{p}\right)\\
&=\sum_{a=1}^{p-1}\sum_{c=1}^{p-1}\left(\frac{a^2-c^2}{p}\right)\left(\frac{c^2-1}{p}\right)\left(\frac{a^2-1}{p}\right).
\end{align*}
We take $a=1$ in Theorem \ref{sum-2} and find that $N=O(p)$. Putting this estimate of $N$ in \cite[Theorem 2]{yuan}, we readily obtain the required result.
\end{proof}
\begin{proof}[Proof of Theorem \ref{MT-4}]
	Recall from \eqref{X2} that 
	\begin{align*}
	T=\sum_{a=2}^{p-2}\sum_{b=1}^{p-1}\sum_{d=1}^{p-1}\left(\frac{a^2-b^2}{p}\right)\left(\frac{b^2-1}{p}\right)\left(\frac{a^2-d^2}{p}\right)\left(\frac{d^2-1}{p}\right).
	\end{align*}
	From \eqref{bdd-T} we have 
	\begin{align*}\label{bddd-T}
	T=p^2+O(p^{3/2}).
	\end{align*}
Putting this estimate of $T$ in \cite[Theorem 3]{yuan}, we obtain the required result.
\end{proof}
%%%%%%%%%%%
\section{Acknowledgements}
We are grateful to Nicholas M. Katz for going through the proofs and for his valuable comments. We are indebted to Antonio Rojas Le\'{o}n for bringing the paper of \'{E}. Fouvry, E. Kowalsky, P. Michel, and W. Sawin [5] to our notice and for many fruitful discussions during preparation of this article. 
We also thank Philippe Michel for some useful discussions.
%%%%%%%%%%%

\end{document}